\documentclass[10pt]{amsart}
\usepackage{verbatim}
\usepackage{amsmath}
\newtheorem{theorem}{Theorem}[section]
\newtheorem{proposition}{Proposition}[section]
\newtheorem{lemma}{Lemma}[section]

\theoremstyle{definition}
{}

\theoremstyle{remark}
\newtheorem{remark}{Remark}[section]

\newcommand{\spbp}{\sqrt{-1}\partial \bar{\partial}}

\newcommand{\pbp}{\partial \bar{\partial}}

\numberwithin{equation}{section}
\usepackage{hyperref}
\hypersetup{colorlinks,citecolor=blue,plainpages=false,hypertexnames=false}
\author[V. P. Pingali]{Vamsi Pritham Pingali}
\address{Department of Mathematics, Indian Institute of Science, Bangalore, India - 560012}
\email{vamsipingali@iisc.ac.in}
\begin{document}
\title{Non-abelian symmetric critical gravitating vortices on a sphere}
\begin{abstract}
We produce examples of solutions to the non-abelian gravitating vortex equations, which are a dimensional reduction of the K\"aher-Yang-Mills-Higgs equations. These are equations for a K\"ahler metric and a metric on a vector bundle. We consider a symmetric situation on a sphere with a relationship between the parameters involved (criticality), and perform a non-trivial reduction of the problem to a system of ordinary differential equations on the real line with complicated boundary conditions at infinity. This system involves a parameter whose dependence on the volume of the K\"ahler metric is non-explicit. We prove existence for this system using the method of continuity. We then prove that the parameter can be varied to make sure that all possible admissible volumes are attained.
\end{abstract}
\maketitle
\section{Introduction}\label{sec:intro}
\indent Cosmic strings are (as of now) hypothetical ``topological defects" in the Higgs field, that presumably appeared in the early universe due to spontaneous symmetry breaking \cite{Kibble}. The interaction of a certain kind of a cosmic string with gravity is modelled by the Einstein-Bogomol'nyi equations (in the special case of self-dual solutions of the more general Einstein-Maxwell-Higgs equations in the Bogomol'nyi phase). Typically these strings are assumed to have a ``cross-section" that is the entire plane $\mathbb{R}^2$. However, a more localised model is one that involves a large-radius sphere. A radially symmetric version of such strings was studied by Yang in \cite{Yang3} by reducing the equations to a \emph{single} ordinary differential equation (ODE), and using the shooting method to solve it. Yang produced solutions for \emph{some} volumes. In \cite{GaPiYa}, a vastly more general result was proven, which in particular implies that cosmic strings (with a spherical cross section) exist for any admissible volume. \\
\indent In \cite{AlGaGa2}, a more general system (than the Einstein-Bogomol'nyi system) called the gravitating vortex equations was proposed as a dimensional reduction of the K\"ahler-Yang-Mills (KYM) equations. The KYM equations themselves were introduced in \cite{AlGaGa} to analytically study the moduli space of triples $(X,L,E)$ of a compact polarised manifold $(X,L)$ equipped with a holomorphic vector bundle $E$. These equations admitted a moment map interpretation. This interpretation provided a Futaki-type invariant, whose vanishing was a necessary condition for existence \cite{AlGaGaPi, AlGaGaPiYa}. The vanishing of this invariant immediately led to the non-existence of certain cosmic strings (thus addressing a conjecture of Yang \cite{Yang3}). In \cite{AlGaGaPiYa2}, the necessity of a stability condition, and a uniqueness result were proven for the gravitating vortex equations using a symplectic reduction by stages approach. \\
\indent Motivated by the success of the KYM equations in the study of cosmic strings, and their connection to moduli spaces, the K\"ahler-Yang-Mills-Higgs equations were introduced in \cite{AlGaGa3}. Their dimensional reduction (using vortex vector bundles as opposed to vortex line bundles for the KYM equations) led to the non-abelian gravitating vortex equations. It is not yet known but is reasonable to speculate that these equations have connections to non-abelian cosmic strings arising from the electroweak theory (which is perhaps a more realistic approach to cosmic strings in the early universe) \cite{Yangbook}. In this paper, we produce examples of solutions to these non-abelian gravitating vortex equations on the sphere. Our method is to reduce to a \emph{system} of ODE. Unfortunately, the shooting method is rather formidable affair for systems. Therefore, we introduce a novel approach of using an Aubin-Yau style continuity method. This method might be useful in other contexts involving such kinds of ODE.\\
\indent Let $N>0, 0<l<N$ be integers. Consider $E=\mathcal{O}(N)\oplus\mathcal{O}(N)$ and $\phi= (\phi_1=z^l, \phi_2=z^{N-l})$ where $\phi_1, \phi_2 \in H^0(\mathbb{P}^1, \mathcal{O}(N))$. Using a permutation matrix, we may assume that $l\leq N-l$ without loss of generality. Let $\alpha, \tau>0$ be given parameters (the coupling constant and the symmetry-breaking parameter respectively). Assume, what we call, the criticality condition
$$2N\alpha \tau=1,$$
 and that the volume $2\pi V$ of a K\"ahler class $[\omega]$ satisfies the stability conditions $N<\frac{\tau V}{2} < 2N-2l$. We call such a volume, an \emph{admissible volume}.\\
 We want to solve the following system of equations for a K\"ahler metric $\omega \in [\omega]$ and a Hermitian metric $H$ on $E$.
\begin{gather}
iF_H+\frac{1}{2} \phi \otimes \phi^{*_H} \omega =\frac{\tau}{2} I\otimes\omega, \nonumber \\
Ricci(\omega)+\alpha(-2\spbp +\tau \omega)(\Vert \phi \Vert_H^2-2\tau)=0.
\label{eq:nonabelianvorticesactual}
\end{gather}
For the remainder of this paper, we shall omit the $I$ in $I\otimes \omega$, whenever it is clear from the context. The main result of this paper is as follows.
\begin{theorem}
There exists a smooth solution to \ref{eq:nonabelianvorticesactual} for any given admissible volume $V$.
  \label{thm:main}
\end{theorem}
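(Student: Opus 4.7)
The plan is to exploit the maximal symmetry of the problem to reduce \eqref{eq:nonabelianvorticesactual} to an ODE system, and then deploy an Aubin--Yau style continuity method along a one-parameter family, with a final monodromy-style argument to sweep out all admissible volumes.

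First I would impose $S^1$-symmetry: the $U(1)$-action $z \mapsto e^{i\theta} z$ on $\mathbb{P}^1$ combines with a compatible diagonal action on $E = \mathcal{O}(N) \oplus \mathcal{O}(N)$ that preserves the holomorphic section $\phi = (z^l, z^{N-l})$. Under this symmetry the Kähler form $\omega \in [\omega]$ is determined by a single real function of $r = |z|$, and the Hermitian metric $H$ becomes diagonal with two real-valued weights $h_1, h_2$ that again depend only on $r$. Writing everything in terms of $t = \log |z|^2$ (or a related radial coordinate) and unknowns of the form $(u, f_1, f_2)$ for the Kähler potential and the log-weights, the system \eqref{eq:nonabelianvorticesactual} becomes a coupled nonlinear ODE system on $\R$. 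Smoothness of the metrics at the two fixed points $z=0, z=\infty$ translates into asymptotic boundary conditions at $t \to \pm\infty$; the degree-$N$ nature of $E$ and the multiplicities $l, N-l$ of the zeros of $\phi_1, \phi_2$ dictate the exponential decay/growth rates demanded at each end.

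Next I would solve this ODE system by continuity. I would introduce a deformation parameter $s \in [0,1]$, with $s=0$ corresponding to a decoupled or explicitly solvable problem (for instance, $\alpha = 0$, which is just the non-abelian vortex equation for $H$ on a prescribed background metric, solvable by standard results, together with a fixed $\omega$), and with $s=1$ returning the full gravitating vortex system. Openness would come from showing that the linearisation at any solution is a Fredholm operator of index zero with trivial kernel in the relevant weighted Hölder/Sobolev space adapted to the asymptotic conditions; criticality $2N\alpha\tau = 1$ should be exactly what kills any cokernel via integration by parts against the equation. Closedness requires a priori estimates: $C^0$ bounds on the Kähler potential and on $\log h_i$ via maximum principle on the ODE, together with boundary-layer analysis matching the prescribed exponential rates at $t = \pm \infty$; then higher derivatives by bootstrapping. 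I expect this closedness step, particularly the two-sided asymptotic control of $(f_1, f_2)$ compatible with the integrated stability bounds $N < \tau V/2 < 2N - 2l$, to be the main technical obstacle, since the boundary conditions at infinity are genuinely nonlinear and coupled.

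Because the reduction carries an auxiliary constant of integration (the parameter whose dependence on $V$ is implicit), the continuity method produces for each value of that parameter a unique volume $V(\text{param})$, not a solution at each prescribed $V$. I would therefore conclude by studying the map from the parameter to the attained volume. Admissibility $N < \tau V/2 < 2N - 2l$ corresponds to an open interval, and I would show that the image of the parameter-to-volume map is both open (by an implicit function theorem applied to the linearisation, as in the openness step above) and closed within this interval (by showing that as the parameter approaches either endpoint of its range, $V$ tends to one of the stability thresholds, so no volume inside the admissible window is lost in the limit). Connectedness of the admissible interval then forces surjectivity, completing the proof of Theorem~\ref{thm:main}.
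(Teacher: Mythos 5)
Your overall architecture --- $S^1$-reduction to an ODE system on $\R$ in $t=\ln|z|^2$, an Aubin--Yau continuity path starting from the Bradlow vortex solution on a fixed background, openness via a trivial kernel using criticality, closedness via asymptotic a priori estimates, and a final sweep of the auxiliary parameter to hit every admissible volume by connectedness --- matches the paper's strategy closely. But there is one concrete step that fails as stated: your claim that under the symmetry ``the Hermitian metric $H$ becomes diagonal with two real-valued weights $h_1,h_2$.'' It does not, and it cannot. The first equation contains the term $\phi\otimes\phi^{*_H}$ with $\phi=(z^l,z^{N-l})$ having \emph{both} components nonzero away from $0$ and $\infty$; for a diagonal $H$ this endomorphism has nonvanishing off-diagonal entries (of the form $\phi_1\bar\phi_2\,h_2$, etc.), while $iF_H$ and $\frac{\tau}{2}I$ are diagonal, so a diagonal ansatz is inconsistent with the equation except where one $\phi_i$ vanishes. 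A diagonal $H$ would decouple the problem into two abelian vortex equations, which is precisely what this non-abelian setting is not. Consequently your reduction to three scalar unknowns $(u,f_1,f_2)$ does not capture the system, and the entire closedness analysis you sketch is built on the wrong set of variables.

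The paper's resolution is the substitution $q=P^{\dag}HP$ with $P$ the matrix formed from $(\phi_1,-\phi_1;\phi_2,\phi_2)$ (Equation \ref{eq:defofP}), which converts $\phi\otimes\phi^{*_H}$ into the upper-triangular source $e_1\otimes e_1\,q$ and yields a genuinely matrix-valued ODE (Equation \ref{eq:forqintermsofs}) for a full $2\times2$ Hermitian $q$ with $q_{11}=\Vert\phi\Vert_H^2$. The off-diagonal entry $q_{12}$ is not assumed to vanish; rather, its reality, its sign at the origin, and the vanishing of a weighted integral of it (Equation \ref{eq:zerointegralofq12}) are \emph{derived} in Proposition \ref{prop:necessary} and are essential inputs to the closedness argument, where the estimates proceed by first controlling $\psi=-\ln\det q$ and $q_{11}$ and only then extrapolating to $q_{12}$ and $q_{22}$. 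Two smaller remarks: taking $s=0$ to mean $\alpha=0$ would destroy the criticality identity $2N\alpha\tau=1$ that your own openness argument relies on (the paper instead interpolates the right-hand side of the Einstein-type equation while keeping $\alpha$ fixed); and the parameter-to-volume step requires a uniqueness statement for each fixed parameter (the paper's Proposition \ref{prop:uniquenessforgivenlambda}) before the volume is even a well-defined continuous function to which an intermediate-value or open--closed argument can be applied.
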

To prove Theorem \ref{thm:main}, we use the following method of continuity in $s\in [0,1]$.
\begin{gather}
iF_H+\frac{1}{2} \phi \otimes \phi^{*_H} \omega =\frac{\tau}{2} \omega, \nonumber \\
Ricci(\omega)+\alpha(-2\spbp +\tau \omega)(\Vert \phi \Vert_H^2-2\tau)=(1-s)(Ricci(V\omega_{FS})+\alpha(-2\spbp +\tau V\omega_{FS})(\Vert \phi \Vert_{H_{s=0}}^2-2\tau)),
\label{eq:nonabelianvortices}
\end{gather}
where $\omega_{FS}$ is the Fubini-Study metric $\omega_{FS}=\frac{idz\wedge d\bar{z}}{(1+\vert z \vert^2)^2}=iF_{H_{FS}}$ (for some metric $H_{FS}$ on $\mathcal{O}(1)$). When $s=0$, the non-abelian vortex equation
\begin{gather}
iF_H+\frac{1}{2} \phi \otimes \phi^{*_H} \omega =\frac{\tau}{2} \omega,
\end{gather}
has a solution with $\omega=V\omega_{FS}$ thanks to the stability condition and Theorem 2.1.6 in \cite{Bradlow}. This metric combined with $\omega=V\omega_{FS}$ solves the second equation. However, we need a solution with a $U(1)$ symmetry (described in Remark \ref{rem:symmetries}). The existence of such a solution is proven in Section \ref{sec:symmetriesofvortex}.\\
\indent We shall prove that the set of $s$ for which solutions exist, is open (Section \ref{sec:continuityopenness}) and closed (Section \ref{sec:continuityclosedness}), hence proving existence at $s=1$. To this end, inspired by Yang \cite{Yang3}, we reduce this system to a partial differential equations (PDE) to a system of ODE on $\mathbb{R}$ with complicated boundary conditions at $\pm \infty$ (Section \ref{sec:ODE}) by imposing an $S^1$ symmetry. The trade-off with this method is that we introduce an arbitrary parameter $\lambda>0$ whose dependence on the volume parameter $V$ is complicated and non-explicit. We actually first prove existence for \emph{some} volume (that satisfies the stability conditions). The closedness aspect of this proof is technically challenging. The idea is to first prove estimates on two functions ($\psi$ and $q_{11}$ in the notation of Section \ref{sec:ODE}) and then extrapolate them to the other functions. But taking care of the boundary conditions also requires some work. \\
\indent  Then (in Section \ref{sec:volume}) we produce a family of solutions depending smoothly on $\lambda$, such that the volume is a continuous function of $\lambda$. We then prove that the volume approaches the limits of admissibility as $\lambda$ approaches certain $0$ or $\infty$, thus proving the existence of solutions with any prescribed admissible volume. For one of the limits of admissibility (Proposition \ref{prop:zerolimitlambda}), the proof is relatively simple. The other limit (Proposition \ref{prop:infinitylimitlambda}) is more complicated.\\
\indent The work in this paper is far from being the last word on the subject. In particular, one can consider $\mathcal{O}(N_1)\oplus \mathcal{O}(N_2)$ and $z^{l_1}, z^{l_2}$ satisfying a balancing condition mentioned in \cite{AlGaGa3}. It seems a bit difficult to reduce this system to a system of ODE. In another direction, it might interesting to compare the work of Yao \cite{Yao} with that of Section \ref{sec:volume}.\\

\emph{Acknowledgements}: The author thanks Chengjian Yao, Luis Alvarez-Consul, and Oscar Garcia-Prada for useful suggestions and encouragement. The author also thanks the anonymous referee for useful feedback. The author is partially supported by the DST FIST programme - 2021 [TPN
- 700661].
\section{Symmetries of the non-abelian vortex solution $H_0$}\label{sec:symmetriesofvortex}
\indent We first prove a quasiuniqueness result for the following non-abelian vortex equation on a holomorphic bundle $E$ over a compact Riemann surface $X$.
\begin{gather}
iF_H =\frac{\tau-\phi \otimes \phi^{*_H}}{2} \omega.
\label{eq:nonabelianvortex}
\end{gather}
\begin{lemma}\label{lem:uniqueness}
Let $\omega$ be a fixed smooth K\"ahler form. Suppose $H, He^g$ are two smooth solutions of \ref{eq:nonabelianvortex}, then $E$ is a holomorphic direct sum of $H$-orthogonal subbundles which are eigenbundles for $e^g$. The subbundle containing $\phi$ has eigenvalue $1$.
\end{lemma}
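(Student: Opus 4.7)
The plan is to interpolate the two solutions by a smooth path of Hermitian metrics and exploit the moment-map structure of \eqref{eq:nonabelianvortex}: a suitable Donaldson-type functional along the path must be monotone in $t$, but equal at the two endpoints, forcing the pointwise vanishing of two manifestly non-negative quantities. I would set $H_t := H e^{tg}$ for $t \in [0,1]$. Since $g$ commutes with $e^{tg}$, one has $H_t g = g^{\ast} H_t$ in a local holomorphic frame, so $g$ remains $H_t$-self-adjoint for every $t$. A direct computation gives $H_t^{-1}\dot H_t = g$ and $\phi \otimes \phi^{*_{H_t}} = (\phi \otimes \phi^{*_H}) e^{tg}$, and the classical identity $\tfrac{d}{dt} F_{H_t} = \bar{\partial}\,\partial_{H_t} g$ yields
\[
\dot m_t \;=\; \Lambda\bigl(i\bar{\partial}\,\partial_{H_t} g\bigr) \;+\; \tfrac{1}{2}(\phi\otimes\phi^{*_{H_t}})\,g,
\]
where $m_t := \Lambda(iF_{H_t}) + \tfrac{1}{2}\phi\otimes\phi^{*_{H_t}} - \tfrac{\tau}{2} I$ is the endomorphism-valued moment map whose vanishing is \eqref{eq:nonabelianvortex}. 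By hypothesis $m_0 = m_1 = 0$.

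Next I would set $M(t) := \int_X \Tr(g\,m_t)\,\omega$, so that $M(0) = M(1) = 0$. Differentiating, integrating by parts on the compact Riemann surface $X$ (there are no boundary terms), and using (i) the rank-one identity $\Tr\bigl((\phi\otimes\phi^{*_{H_t}}) g^2\bigr) = |g\phi|^2_{H_t}$ together with (ii) the $H_t$-self-adjointness of $g$ to identify $\partial_{H_t} g$ as the formal adjoint of $\bar{\partial} g$, I obtain
\[
M'(t) \;=\; \int_X |\bar{\partial} g|^2_{H_t}\,\omega \;+\; \tfrac{1}{2} \int_X |g\phi|^2_{H_t}\,\omega \;\geq\; 0.
\]
Thus $M$ is non-decreasing; combined with $M(0)=M(1)=0$, this forces $M \equiv 0$, hence $M' \equiv 0$, and each non-negative integrand must vanish pointwise: $\bar{\partial} g = 0$ and $g\phi = 0$.

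It remains to extract the structural conclusion. From $\bar{\partial} g = 0$ and the $H$-self-adjointness, $g$ is a holomorphic section of $\mathrm{End}(E)$ with real eigenvalues; a real-valued holomorphic function on the compact connected surface $X$ is constant, so the eigenvalues of $g$ are constants $\lambda_1 < \cdots < \lambda_k$. The eigenbundles $E_j := \ker(g - \lambda_j)$ are holomorphic subbundles, are mutually $H$-orthogonal by self-adjointness, and assemble into a holomorphic direct sum $E = \bigoplus_j E_j$; they are simultaneously eigenbundles of $e^g$ with eigenvalues $e^{\lambda_j}$. Finally $g\phi = 0$ forces $\phi \in \ker g$, i.e., $\phi$ lies in the eigenbundle of $e^g$ corresponding to eigenvalue $1$. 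The technical point I expect to need the most care is the sign bookkeeping in the Stokes step that produces the positive $|\bar{\partial} g|^2_{H_t}\,\omega$ term — specifically, checking that the identification $(\bar{\partial} g)^{*_{H_t}} = \partial_{H_t} g$, which rests on $g$ being $H_t$-self-adjoint, is applied with the correct orientation of $dz \wedge d\bar{z}$, so that both summands of $M'(t)$ come out with a $+$ sign.
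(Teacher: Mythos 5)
Your proposal is correct and is essentially the paper's own argument: the paper also sets $H_t = He^{tg}$, integrates the difference of the two equations over $t$, pairs with $g$, and integrates by parts over $X$ to obtain the sum of the two non-negative terms $\int_0^1\!\!\int_X \bigl(\mathrm{tr}(i\partial_{H_t} g\wedge\bar{\partial}g) + \tfrac{1}{2}\mathrm{tr}(g\,\phi\otimes\phi^{*_{H_t}}g)\,\omega\bigr)\,dt = 0$, which is exactly your identity $\int_0^1 M'(t)\,dt = M(1)-M(0)=0$ with $M'\geq 0$. The conclusion $\bar{\partial}g=0$, $g\phi=0$, followed by diagonalising the resulting holomorphic $H$-self-adjoint automorphism $e^g$ into constant-eigenvalue eigenbundles, matches the paper step for step.
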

\begin{proof}
Let $H_t=H_0 e^{tg}$. Now we calculate as follows.
\begin{gather}
iF_{H_0e^g}-iF_{H_0} =\frac{\phi \otimes \phi^*_{H_0}-\phi \otimes \phi^*_{H_0e^g}}{2} \omega \nonumber \\
\Rightarrow \displaystyle \int_0^1 dt \left( \frac{d(iF_{H_t})}{dt}+\frac{1}{2}\frac{d(\phi \otimes \phi^*_{H_t})}{dt} \omega \right) =0\nonumber \\
\Rightarrow \int_0^1 dt \left (i\bar{\partial} \partial_t g + \frac{\phi\otimes \phi^*_{H_t}g}{2} \omega \right)=0\nonumber \\
\Rightarrow \int_0^1 dt \int_{X}\left( tr(i\partial_t g \wedge \bar{\partial} g)+\frac{tr(g\phi\otimes \phi^*_{H_t}g)}{2} \omega \right) =0.
\end{gather}
Thus $g$ is parallel and $g\phi=0$ identically (hence $e^g \phi=\phi$). Thus $e^g$ generates a holomorphic automorphism that is Hermitian and fixes $\phi$. Diagonalising this automorphism, we get a splitting of $E$ into an $H$-orthogonal direct sum of subbundles, one of which contains $\phi$.
\end{proof}
Now if $\varphi$ is a biholomorphism of $X$ that extends to an isomorphism of $E$, $\omega$ is $\varphi$-invariant, and $\phi$ is an equivariant section, then it is easy to see that if $H$ is a solution of Equation \ref{eq:nonabelianvortex}, so is $\varphi. H$. By the quasiuniqueness Lemma \ref{lem:uniqueness} we see that either $\varphi. H=H$ (when the bundle is $H$-indecomposable) or $H=\mathcal{H}\oplus\mathcal{K}$ and $E=E_1\oplus E_2$ where $E_1$ is the unique rank-$1$ subbundle containing $\phi$. In the latter case, $\varphi. H= \mathcal{H}\oplus \lambda \mathcal{K}$ for some $\lambda>0$. Applying this observation in conjunction with Remark \ref{rem:symmetries} we see that $H_0$ is $U(1)$-invariant.
\section{Formulation as a system of ODE}\label{sec:ODE}
\indent  Let $H_0$ be the solution of the vortex equation with $V\omega_{FS}$ as the K\"ahler metric and $H=H_0g$ where $g$ is an $H_0$-Hermitian positive-definite endomorphism of $E$. Let $H_0 = ( H_{FS}^N \oplus H_{FS}^N )g_0$ where $H_{FS}$ is the Fubini-Study metric on $\mathcal{O}(1)$. Let $\det(g)\det(g_0)=e^{-v}$. Note that
\begin{gather}
Ricci(\omega)=Ricc(V\omega_{FS})-\spbp \ln \left (\frac{\omega}{V\omega_{FS}} \right).
\label{eq:Ricciexp}
\end{gather}
Tracing both sides of the first equation in \ref{eq:nonabelianvortices}, we see that
\begin{gather}
-2 tr(iF_H)=\left(\Vert \phi \Vert_H^2 -2\tau \right)\omega.
\label{eq:tracingthefirstequation}
\end{gather}
We now substitute the expressions obtained from Equations \ref{eq:tracingthefirstequation} and \ref{eq:Ricciexp} in the second equation in \ref{eq:nonabelianvortices}. We arrive at the following expressions after using the fact that $H_0$ solves the vortex equation with $V\omega_{FS}$ as the background metric.
\begin{gather}
-\spbp \ln \left(\frac{\omega}{\omega_{FS}}\right)-2\alpha\spbp \Vert \phi \Vert_H^2 = 2\alpha \tau \mathrm{tr}(iF_H)-sRicci(V\omega_{FS})\nonumber \\
+(1-s)\alpha(-2\spbp +\tau V\omega_{FS})(\Vert \phi \Vert_{H_{s=0}}^2-2\tau) \nonumber \\
\Rightarrow 0=i\pbp \left(\ln\left(\frac{\omega}{V\omega_{FS}}\right)+2\alpha \Vert \phi \Vert_H^2-2\alpha (1-s)\Vert \phi \Vert_{H_{s=0}}^2  \right)\nonumber \\
-2(1-s)\alpha \tau \mathrm{tr}(iF_{H_0})  -2s \omega_{FS}+ 2\alpha \tau\mathrm{tr}(iF_H) \nonumber \\
\Rightarrow 0=i\pbp \left(\ln\left(\frac{\omega}{V\omega_{FS}}\right)+2\alpha \Vert \phi \Vert_H^2-2\alpha (1-s)\Vert \phi \Vert_{H_{s=0}}^2  +\frac{v}{N}+\frac{1}{N}\ln\det (g_0)\right)\nonumber \\
  -2s \omega_{FS}+s2\alpha \tau\mathrm{tr}(iF_{H_0}).
 \label{eq:somesimplifications}
\end{gather}
Let $u_0$ be a smooth function (with $\int u_0 \omega_{FS}=0$) satisfying $i\pbp u_0= 2\omega_{FS}-2\alpha \tau i \mathrm{tr}(F_{H_0})$. Substituting for $u_0$ in \ref{eq:somesimplifications} and solving for $\omega$, we arrive at the following expression.
\begin{gather}
\omega=2 \times 4^{1/N} \lambda_s \omega_{FS} e^{-2\alpha \Vert \phi \Vert_H^2-\frac{v}{N}},
\label{eq:foromega}
\end{gather}
where $\lambda_s =\frac{1}{2 \times 4^{1/N}}\det(g_0)^{-1/N} V\lambda^s e^{-(s-1)2\alpha \Vert \phi \Vert_{H_0}^2 +su_0}$, with $\lambda>0$ being a constant. As mentioned in the introduction, there exists a solution $H_0$ that is $U(1)$-symmetric. Since $u_0$ can be chosen (by averaging for instance) to be radially symmetric and hence $\lambda_s$ is radially symmetric on the sphere.\\
Substituting \ref{eq:foromega} in the first equation in \ref{eq:nonabelianvortices} we arrive at the following.
\begin{gather}
iF_H=4^{1/N} \lambda_s \omega_{FS}(\tau- \phi \otimes \phi^{*_H})e^{-2\alpha \Vert \phi \Vert_H^2-\frac{v}{N}}.
\label{eq:nonabelianvorticessimplifiedtooneequation}
\end{gather}
\indent In the usual trivialisation of $V$ on $\mathbb{C}$, $\det(H) (1+\vert z \vert^2)^{2N}=e^{-v}$. Therefore,
\begin{gather}
\frac{\partial}{\partial \bar{z}}\left(H^{-1} \frac{\partial H}{\partial z}\right)=4^{1/N}\lambda_s (\phi \otimes \phi^{*_H}-\tau)e^{-2\alpha \Vert \phi \Vert_H^2-\eta/N},
\label{eq:intheusualtriv}
\end{gather}
where $e^{-\eta}=\det(H)$. In order to extend smoothly across $\infty$, we want $H(1/w) \vert w \vert^{-2N}$ to extend as a smooth positive-definite matrix near $w=0$. The following lemma simplifies the extension condition.
\begin{lemma}
Let $H$ be a smooth Hermitian positive-definite matrix on $\mathbb{C}^*$ satisfying Equation \ref{eq:intheusualtriv}. Suppose $\det(H)$ is a radially symmetric function. If $H$ extends smoothly as a metric across $0$ and $\infty$, the following conditions are met.
\begin{gather}
\Vert \phi \Vert_{H}^2 (z) \leq \tau \ \forall \ z, \nonumber \\
\vert H(z) \vert \leq C \ \mathrm{near} \ z=0, \nonumber  \\
 \vert H(1/w) \vert \vert w^{-2N} \vert \leq C \ \mathrm{near} \ w=0, \nonumber \\
\left\Vert H^{-1} \frac{\partial H}{\partial z} \right\Vert_{L^{\infty}(B(0,1))}\leq C, \ \mathrm{and} \nonumber \\
\left \Vert H^{-1}\frac{\partial H}{\partial w} -\frac{N}{w} \right\Vert_{L^{\infty}(B(0,1))}\leq C.
\label{eq:iflimitsextension}
\end{gather}
Conversely, if the following conditions are met, then $H$ extends smoothly.
\begin{gather}
\Vert \phi \Vert_{H}^2 (z) \leq \tau \ \forall \ z, \nonumber \\
\lim_{z\rightarrow 0}  z   \frac{\partial \ln\det(H)}{\partial z} =0, \nonumber \\
\lim_{w\rightarrow 0}  w  \frac{\partial \ln\det(H)}{\partial w}  =2N, \nonumber \\
 \left \Vert  H^{-1} \frac{\partial H}{\partial z} \right\Vert_{L^2(B(0,1))}\leq C, \ \mathrm{and} \nonumber \\
 \left  \Vert H^{-1}\frac{\partial H}{\partial w} -\frac{N}{w} \right \Vert_{L^2(B(0,1))}\leq C.
\label{eq:limitsextension}
\end{gather}
\label{lem:extensionofH}
\end{lemma}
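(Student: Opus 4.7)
The plan is to handle the two implications separately. The forward direction is essentially a change-of-frame computation at $\infty$ together with a Bradlow-type maximum principle; the converse is an elliptic bootstrap, in which the main care is to decouple the $\det H$ equation and use the limit hypotheses as indicial data.

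For the forward implication, assume $H$ extends smoothly. The bounds on $\vert H \vert$ and $\Vert H^{-1} \partial_z H\Vert_{L^\infty}$ near $z=0$ are immediate from smoothness of $H$. For the bounds at $\infty$, I would work in the frame $\tilde H(w) := \vert w\vert^{-2N} H(1/w)$, which extends smoothly across $w=0$ by hypothesis. A direct computation using $\partial_w \ln \vert w \vert^{-2N} = -N/w$ yields
$$\tilde H^{-1}\partial_w \tilde H \;=\; H^{-1} \partial_w H - \frac{N}{w},$$
so the smoothness of $\tilde H$ transfers to the stated $L^\infty$ bound on $H^{-1}\partial_w H - N/w$, and the bound on $\vert H(1/w)\vert\, \vert w\vert^{-2N}$ is precisely boundedness of $\vert \tilde H\vert$. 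The pointwise bound $\Vert \phi \Vert_H^2 \leq \tau$ is then a standard Bradlow-type maximum-principle argument applied to the scalar function $\Vert\phi\Vert_H^2$ using \ref{eq:intheusualtriv} on the compactified sphere.

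For the converse, I would proceed in three stages. First, trace \ref{eq:intheusualtriv} to obtain the scalar semilinear equation
$$\partial_z \partial_{\bar z} \ln \det H \;=\; 4^{1/N} \lambda_s \,(\Vert \phi\Vert_H^2 - 2\tau)\, e^{-2\alpha \Vert \phi\Vert_H^2 - \eta/N}.$$
Since $\Vert\phi\Vert_H^2 \leq \tau$ is assumed, the right-hand side is uniformly bounded. The radial symmetry of $\det H$ reduces this to a scalar ODE in $r=\vert z \vert$ whose indicial exponents at $0$ and at $\infty$ are determined by the hypotheses $\lim_{z\to 0} z\,\partial_z \ln\det H = 0$ and $\lim_{w\to 0} w\,\partial_w \ln\det H = 2N$; these exactly rule out logarithmic singularities and fix the leading behavior $\det H \sim \mathrm{const}$ at $0$ and $\det H \sim \vert w\vert^{2N}$ at $\infty$. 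Second, with $\det H$ pinned down and the RHS of \ref{eq:intheusualtriv} bounded, the $L^2$ control on $H^{-1}\partial_z H$ allows standard elliptic bootstrapping $W^{1,2} \to W^{1,p} \to C^{1,\alpha} \to C^\infty$ across $z=0$. Third, applying the same bootstrap to $\tilde H$ in the $w$-coordinate, using the $L^2$ bound on $\tilde H^{-1}\partial_w \tilde H = H^{-1}\partial_w H - N/w$, gives smooth extension across $\infty$.

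The main obstacle is that the $L^2$ hypotheses alone do not preclude $H$ from becoming degenerate or unbounded as $z \to 0$ or $z \to \infty$; this must be excluded by using the indicial-type limit conditions on $\det H$ together with the pointwise bound $\Vert\phi\Vert_H^2 \leq \tau$ to force the correct leading asymptotics at both ends. Once this opening step is in place, the remaining passage to $C^\infty$ is a routine iteration of elliptic estimates, and matching the $z$-bootstrap near $0$ with the $w$-bootstrap near $\infty$ via the identity displayed above is purely a bookkeeping exercise.
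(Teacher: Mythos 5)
Your forward direction and your treatment of $\det H$ are essentially the paper's: the trace of \ref{eq:intheusualtriv} gives a scalar Poisson equation with bounded right-hand side (boundedness coming from $\Vert\phi\Vert_H^2\leq\tau$), and the limit hypotheses on $z\,\partial_z\ln\det H$ and $w\,\partial_w\ln\det H$ kill the logarithmic mode, so $\eta=-\ln\det H$ extends (the paper does this via a Dirichlet comparison solution and the harmonic removable-singularity lemma of Spruck--Yang rather than an indicial ODE analysis, but that difference is cosmetic).

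The gap is in your second stage, where you claim that ``with $\det H$ pinned down... the $L^2$ control on $H^{-1}\partial_z H$ allows standard elliptic bootstrapping $W^{1,2}\to W^{1,p}\to C^{1,\alpha}\to C^\infty$ across $z=0$.'' This does not go through as stated, for two reasons. First, to even place $H$ in $W^{1,2}$ of the full disc from $H^{-1}\partial_zH\in L^2$ you need $H$ bounded above and below near the puncture, and nothing you have established provides this: $\det H$ bounded and bounded away from $0$ does not prevent one eigenvalue of $H$ from blowing up while the other degenerates, and $\Vert\phi\Vert_H^2\leq\tau$ gives no lower-order control precisely at $z=0$ and $w=0$, where $\phi$ vanishes. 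You flag this as ``the main obstacle'' but the resolution you propose (indicial data for $\det H$ plus the $\phi$-bound) does not actually close it. Second, even granting boundedness, writing $\partial_{\bar z}\partial_z H=\partial_{\bar z}H\cdot(H^{-1}\partial_zH)+H\cdot(\mathrm{RHS})$ puts a product of two $L^2$ functions on the right, which in two dimensions is only $L^1$ --- the critical case --- so the iteration $W^{1,2}\to W^{1,p}$ is not a routine elliptic estimate. The paper's proof avoids both problems by working with the connection form rather than with $H$: it solves $\partial_{\bar z}\theta=\mathrm{RHS}$ locally to get $\theta\in C^{0,\beta}$, observes that $U=H^{-1}\partial_zH-\theta$ is holomorphic on the punctured disc and that the $L^2$ hypothesis forces $\int_{r<|z|<2r}\Vert U\Vert\leq Cr$, which removes the singularity of $U$; hence $H^{-1}\partial_zH$ itself extends continuously. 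It then \emph{reconstructs} $H$ by solving the first-order system $\partial_zK=K(U+\theta)$, $\partial_{\bar z}K=(U+\theta)^{\dag}K$ with boundary value $H$ and showing $K\equiv H$ by a Gronwall argument along radii, which yields the boundedness and $C^{1,\beta}$ extension of $H$ as an output rather than requiring it as an input. This removable-singularity-plus-integration step is the essential idea of the converse, and it is missing from your proposal.
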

\begin{proof}
It is easy to see that if $H$ extends smoothly, then all the conditions in \ref{eq:iflimitsextension} except possibly the first, hold. The first condition holds thanks to the proof of Lemma \ref{lem:fundamentalq11estimate}.\\
 We now prove the other direction (the proof is a more complicated version of that of Lemma 3.2 in \cite{spruckyang}). We shall only prove extendability across $0$. The case of $\infty$ is similar.\\
\indent Tracing both sides of \ref{eq:intheusualtriv} we arrive at the following equation.
\begin{gather}
\frac{\partial^2 \eta}{\partial z\partial \bar{z}}=4^{1/N}\lambda_s (2\tau-\Vert \phi \Vert_H^2)e^{-2\alpha \Vert \phi \Vert_H^2-\eta/N}.
\label{eq:aftertrace}
\end{gather}
The given conditions \ref{eq:limitsextension} imply that that the right-hand-side of the distributional Equation \ref{eq:aftertrace} is an $L^{p}_{loc}$ function (for every $p$) on all of $\mathbb{C}$. Therefore, there is a $W^{2,p}$ for $p>>1$ (and hence $C^{1,\beta}$) solution $u$ of the following equation on the unit disc (with the smooth Dirichlet boundary condition of being equal to $\eta$ on the boundary).
\begin{gather}
\frac{\partial^2 u}{\partial z\partial \bar{z}}=4^{1/N}\lambda_s (2\tau-\Vert \phi \Vert_H^2)e^{-2\alpha \Vert \phi \Vert_H^2-\eta/N}.
\end{gather}
Therefore, $\eta-u$ is harmonic on the unit disc and satisfies $\lim_{z\rightarrow 0} \frac{\eta-u}{\ln \vert z \vert}=0$ (using the L' H\^opital rule for $\eta(\vert z \vert)$).  Lemma 3.1 in \cite{spruckyang} shows that $\eta-u$ extends in a harmonic manner across the origin. Thus, $\eta$ extends in a $C^{1,\beta}$ manner across the origin.\\
\indent Consider the distributional equation
\begin{gather}
\frac{\partial \theta}{\partial \bar{z}}=4^{1/N}\lambda_s (\phi \otimes \phi^{*_H}-\tau)e^{-2\alpha \Vert \phi \Vert_H^2-\eta/N}
\label{eq:distequationunitdisc}
\end{gather}
on the unit disc. Since $\bar{\partial}$ is elliptic in dimension-one and the right-hand-side is in $L^{\infty}$, there is a $W^{1,p}$ for large $p$ and hence $C^{0,\beta}$ solution to this equation on the unit disc.
Thus, $U=H^{-1} \frac{\partial H}{\partial z}-\theta$ is holomorphic in a punctured disc. Now $\displaystyle \int_{r<\vert z \vert<2r}  \Vert U \Vert \leq C r$ and hence $U$ extends holomorphically to the disc. This means that $U+\theta=H^{-1} \frac{\partial H}{\partial z}$ extends in a $W^{1,p}$ and $C^{0,\beta}$ manner to the centre. Thus, there exists a $C^{1,\beta}$ Hermitian solution $K$ to the elliptic equations $$\frac{\partial K}{\partial z}=K(U+\theta)$$ $$\frac{\partial K}{\partial \bar{z}}=(U+\theta)^{\dag}K$$ on a disc around the origin with $K=H$ on the boundary. Now $\Vert \nabla (K-H) \Vert\leq C\Vert K-H\Vert$ and $K-H=0$ on the boundary of the disc. Consider any line $\gamma(t)$ with constant speed from a point on the boundary of the disc to the origin. Along that line $\Vert\frac{d(K-H)}{dt} \Vert \leq C\Vert K-H \Vert$ and $(K-H)(0)=0$. Hence, $K-H=0$ throughout. This means that $H$ extends in a $C^{1,\beta}$ manner across the origin. By elliptic bootstrapping, $H$ is smooth.
\end{proof}
Define the Hermitian matrix $q$ as $q=P^{\dag}H P$ where
\begin{gather}
P=\left [\begin{array}{cc}
\phi_1 & -\phi_1 \\
\phi_2 & \phi_2
\end{array} \right ].
\label{eq:defofP}
\end{gather}
Clearly, $q$ is positive-definite away from $0$ and $\infty$. If $H$ satisfies \ref{eq:intheusualtriv}, then $q$ satisfies the following equation on $\mathbb{C}^{*}$ (noting that because $P$ changes the basis from the standard one ($e_1,e_2$) to the columns of $P$).
\begin{gather}
\frac{\partial}{\partial \bar{z}}\left(H^{-1} \frac{\partial H}{\partial z}\right)=P\frac{\partial}{\partial \bar{z}}\left(q^{-1} \frac{\partial q}{\partial z}\right)P^{-1} \nonumber \\
\Rightarrow \frac{\partial}{\partial \bar{z}}\left(q^{-1} \frac{\partial q}{\partial z}\right)= \lambda_s 4^{1/N} (P^{-1}\phi \otimes \phi^{\dag} H P-\tau)e^{-2\alpha \mathrm{tr}(\phi\otimes \phi^{\dag_H})-\eta/N} \nonumber \\
=\lambda_s 4^{1/N} (e_1\otimes e_1 q-\tau)e^{-2\alpha \mathrm{tr}(\phi\otimes \phi^{\dag_H})-\eta/N} \nonumber \\
=\lambda_s 4^{1/N} \left(\left[\begin{array}{cc}1 & 0 \\ 0 & 0 \end{array} \right] q-\tau Id\right ) e^{-2\alpha q_{11}} \det(q)^{-1/N} \vert\det(P)\vert^{-2/N}\nonumber \\
=\lambda_s \left(\left[\begin{array}{cc}1 & 0 \\ 0 & 0 \end{array} \right] q-\tau Id\right )e^{-2\alpha q_{11}-\psi/N-\ln \vert z \vert^2}, \nonumber \\
=\lambda_s \left(\left[\begin{array}{cc}q_{11}-\tau & q_{12}  \\ 0 & -\tau \end{array} \right]\right )e^{-2\alpha q_{11}-\psi/N-\ln \vert z \vert^2}
\label{eq:forqawayfrom0andinfinity}
\end{gather}
where $e^{-\psi}=\det(q)$. Assume that $q$ satisfies \ref{eq:forqawayfrom0andinfinity},
\begin{gather}
q_{11}\leq \tau, \nonumber \\
\lim_{z\rightarrow 0}  z\frac{\partial \ln \det(q)}{\partial z} = N , \nonumber \\
\lim_{w\rightarrow 0} w  \frac{\partial \ln \det (q)}{\partial w}  = -N, \nonumber \\
\left \Vert  P  q^{-1} \frac{\partial q}{\partial z} P^{-1} - \left [\begin{array}{cc} \frac{l}{z} &0\\ 0 & \frac{N-l}{z} \end{array} \right ]\right \Vert_{L^2(B(0,1))}\leq C, \ \mathrm{and} \nonumber \\
 \left \Vert P  q^{-1} \frac{\partial q}{\partial w} P^{-1} +\left [\begin{array}{cc} \frac{N-l}{w} &0\\ 0 & \frac{l}{w} \end{array} \right ] \right\Vert_{L^2(B(0,1))}\leq C.
\label{eq:boundaryconditions}
\end{gather}
\indent Since Equation \ref{eq:forqawayfrom0andinfinity} is symmetric, we can try to find a symmetric solution.
\begin{remark}\label{rem:symmetries}
More precisely, consider the action of $S^1$ on $\mathbb{P}^1$ given by rotations, i.e., $[X^0:X^1]\rightarrow [X^0:e^{\sqrt{-1}\theta}X^1]$ and extend it in two ways to $\mathcal{O}(-N)\subset \mathbb{P}^1\times \mathbb{C}^2$ as $([X^0:X^1],\mu (X^0,X^1))\rightarrow ([X^0:e^{\sqrt{-1}\theta}X^1], \mu e^{\sqrt{-1}k_i\theta} (X^0,e^{\sqrt{-1}\theta}X^1)$ for two integers $k_1, k_2$. If we choose $k_1, k_2$ such that $k_1-k_2= 2l-N$, then it is easy to see that a metric (which is a section of $E\otimes \bar{E}$) transforms in such a way that $q$ is radially symmetric.
\end{remark}
\indent Assume that $q$ is a function of $t=\ln\vert z \vert^2$. Then
\begin{gather}
(q^{-1}q')'=\lambda_s \left(\left[\begin{array}{cc}1 & 0 \\ 0 & 0 \end{array} \right] q-\tau Id\right )e^{-2\alpha q_{11}-\psi/N} \nonumber \\
=\lambda_s \left(\left[\begin{array}{cc}q_{11}-\tau & q_{12}  \\ 0 & -\tau \end{array} \right]\right )e^{-2\alpha q_{11}-\psi/N}
\label{eq:forqintermsofs}
\end{gather}
 The boundary conditions in terms of $t$ are as follows.
\begin{gather}
q_{11}\leq \tau, \nonumber \\
\lim_{t\rightarrow -\infty} \psi' =-N, \nonumber \\
\lim_{t\rightarrow \infty} \psi' =N, \nonumber \\
\left \Vert   P q^{-1} q' P^{-1} - \left [\begin{array}{cc} l &0\\ 0 & N-l \end{array} \right ]\right\Vert_{L^2(-\infty,0)} =0  , \nonumber \\
\left \Vert   P q^{-1}q' P^{-1} +\left [\begin{array}{cc} N-l &0\\ 0 & l \end{array} \right ] \right \Vert_{L^2(0,\infty)}  =0.
\label{eq:boundaryintermsofs}
\end{gather}
These boundary conditions do not appear to be radially symmetric at first glance, but the explicit expressions in the proof of Proposition \ref{prop:necessary} (namely, Equations \ref{eq:generalPWPinv} and \ref{eq:PdagqP}) show that indeed they are equivalent to radially symmetric boundary conditions. Since $\det(H)=\frac{\det(q)}{4 \vert z\vert^{2N}}$ is radial, by Lemma \ref{lem:extensionofH}, the corresponding $H$ is a solution of Equation \ref{eq:nonabelianvorticessimplifiedtooneequation}. \\
\indent Before proceeding further, we record the following \emph{a priori} estimate on $\Vert \phi \Vert_H^2 = q_{11}$.
\begin{lemma}
Along the continuity path \ref{eq:nonabelianvortices}, $q_{11}=\Vert \phi \Vert_H^2 \leq \tau$.
\label{lem:fundamentalq11estimate}
\end{lemma}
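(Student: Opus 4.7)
The plan is a straightforward maximum principle argument on the compact Riemann surface $\PP^1$. The key observation is that the first equation of the continuity path \ref{eq:nonabelianvortices}, namely
\[
iF_H+\tfrac{1}{2}\phi\otimes\phi^{*_H}\omega=\tfrac{\tau}{2}\omega,
\]
is independent of $s$ and identical to the usual non-abelian vortex equation, so the estimate will follow from this equation alone, uniformly in $s\in[0,1]$.

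First I would invoke the standard Bochner-type identity for a holomorphic section $\phi$ of the Hermitian bundle $(E,H)$:
\[
\spbp \Vert\phi\Vert_H^2 \;=\; \bigl\vert \nabla^{1,0}_H\phi\bigr\vert_H^2\,\omega \;-\; \bigl\langle iF_H\phi,\phi\bigr\rangle_H,
\]
where $\nabla^{1,0}_H$ denotes the $(1,0)$-part of the Chern connection and both sides are viewed as $(1,1)$-forms on $\PP^1$. Next, pairing the vortex equation above with $\phi$ and using $(\phi\otimes\phi^{*_H})\phi=\Vert\phi\Vert_H^2\phi$, one obtains
\[
\bigl\langle iF_H\phi,\phi\bigr\rangle_H \;=\; \tfrac{1}{2}\Vert\phi\Vert_H^2\bigl(\tau-\Vert\phi\Vert_H^2\bigr)\,\omega.
\]
Substituting and discarding the manifestly non-negative gradient term yields the pointwise inequality
\[
\spbp \Vert\phi\Vert_H^2 \;\geq\; \tfrac{1}{2}\Vert\phi\Vert_H^2\bigl(\Vert\phi\Vert_H^2-\tau\bigr)\,\omega.
\]

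To conclude, I would apply the maximum principle. Along the continuity path, $H$ is a smooth Hermitian metric on the global bundle $E\to\PP^1$, and $\phi$ is a global holomorphic section, so $\Vert\phi\Vert_H^2$ is smooth on the compact manifold $\PP^1$ and attains its maximum at some point $p$. There $\spbp \Vert\phi\Vert_H^2(p)\leq 0$ as a $(1,1)$-form; combining this with the inequality above forces $\Vert\phi\Vert_H^2(p)\bigl(\Vert\phi\Vert_H^2(p)-\tau\bigr)\leq 0$. Since $\Vert\phi\Vert_H^2(p)\geq 0$, we get $\Vert\phi\Vert_H^2(p)\leq \tau$, and then $q_{11}=\Vert\phi\Vert_H^2\leq \tau$ everywhere. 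There is no genuine obstacle here; the only auxiliary facts are the computational identification $q_{11}=\Vert\phi\Vert_H^2$ (immediate from $q=P^{\dag}HP$ together with the explicit form of $P$ in \ref{eq:defofP} and $\phi=(\phi_1,\phi_2)^T$ being the first column of $P$), and smoothness of the solution at each $s$, which is built into the continuity method.
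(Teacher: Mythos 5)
Your proof is correct and takes essentially the same route as the paper: the Bochner-type identity for the holomorphic section $\phi$, the maximum principle on the compact surface $\PP^1$, and substitution of the ($s$-independent) first vortex equation to give the curvature term the sign $\tfrac{1}{2}\Vert\phi\Vert_H^2(\tau-\Vert\phi\Vert_H^2)\omega$. You simply spell out the pairing and sign bookkeeping more explicitly than the paper's three-line argument does.
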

\begin{proof}
Consider the following Bochner-type identity.
\begin{gather}
\spbp \Vert \phi \Vert^2 = -i\langle \phi, F \phi \rangle + i \langle \nabla^{1,0} \phi, \nabla^{1,0} \phi \rangle,
\label{eq:thebochneridentity}
\end{gather}
At this maximum of $\Vert \phi \Vert^2$, by the maximum principle, $i\langle \phi, F \phi \rangle\leq 0$. Using this inequality in Equation \ref{eq:nonabelianvorticessimplifiedtooneequation}, we see that $q_{11}=\Vert \phi \Vert^2\leq \tau$.
\end{proof}
\section{Openness}\label{sec:continuityopenness}
\indent Consider the set $\mathcal{B}_1$ of $C^{4,\beta}$ $S^1$-invariant (as per Remark \ref{rem:symmetries}) metrics $H$ satisfying the normalisation condition
\begin{gather}
4N\pi=\displaystyle \int 4^{1/N}\lambda_s \omega_{FS}e^{-2\alpha \Vert \phi \Vert_H^2-\frac{v}{N}} (2\tau-\Vert \phi \Vert_H^2).
\label{eq:normalisationofH}
\end{gather}
Using Lemma \ref{lem:fundamentalq11estimate} and the implicit function theorem, we see that $\mathcal{B}_1$ is a Banach manifold.  Let $\mathcal{B}_2$ be the Banach manifold of $C^{2,\alpha}$ $S^1$-invariant $(1,1)$-form valued endomorphisms whose trace has a fixed integral. Consider the smooth map $$T(H)=\sqrt{-1}F_H-4^{1/N}\lambda_s \omega_{FS}(\tau- \phi \otimes \phi^{*_H})e^{-2\alpha \Vert \phi \Vert_H^2-\frac{v}{N}}.$$ We write $H_h = H \exp(h)$. Linearising it at $H=T^{-1}(0)$ (i.e., $h=0$) we see that the following equation holds.
\begin{gather}
\delta T(h)= \sqrt{-1}\bar{\partial}\partial_H h+4^{1/N}\lambda_s \omega_{FS}e^{-2\alpha \Vert \phi \Vert_H^2-\frac{v}{N}} \left(\phi \otimes \phi^{*_H} h +\left(\tau-\phi \otimes \phi^{*_H} \right)\left(\frac{\mathrm{tr}(h)}{N}+2\alpha \mathrm{tr}(\phi \otimes \phi^{*_H} h)\right) \right).
\label{eq:linofT}
\end{gather}
Since this operator is formally self-adjoint, to prove openness, it is enough by elliptic theory and the implicit function theorem on Banach manifolds to prove that the kernel is trivial. Indeed, if $\delta T(h)=0$, then multiplying by $h$ and integrating-by-parts we see that
\begin{gather}
0=\int \mathrm{tr}(\partial_H h \wedge \bar{\partial} h) \nonumber \\
+\int 4^{1/N}\lambda_s \omega_{FS}e^{-2\alpha \Vert \phi \Vert_H^2-\frac{v}{N}} \left(\mathrm{tr}h\phi \otimes \phi^{*_H} h +\left(\tau\mathrm{tr}(h)-\mathrm{tr}(h\phi \otimes \phi^{*_H}) \right)\left(\frac{\mathrm{tr}(h)}{N}+2\alpha \mathrm{tr}(\phi \otimes \phi^{*_H} h)\right) \right).
\end{gather}
We claim that the second integrand is non-negative, i.e.,
\begin{lemma}
\begin{gather}
\mathrm{tr}(h\phi \otimes \phi^{*_H} h) +\left(\tau\mathrm{tr}(h)-\mathrm{tr}(h\phi \otimes \phi^{*_H}) \right)\left(\frac{\mathrm{tr}(h)}{N}+2\alpha \mathrm{tr}(\phi \otimes \phi^{*_H} h)\right) \geq 0.
\end{gather}
\label{lem:nonnegativesecondintegrand}
\end{lemma}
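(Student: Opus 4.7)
The strategy is to diagonalize $\phi\otimes\phi^{*_H}$ pointwise in an $H$-unitary frame. Since $E$ has rank two and $\phi\otimes\phi^{*_H}$ is a rank-one positive semidefinite endomorphism, at each point where $\phi\neq 0$ we may choose such a frame in which $\phi\otimes\phi^{*_H}=\mathrm{diag}(p,0)$ with $p=\Vert\phi\Vert_H^2=q_{11}$. Writing the $H$-Hermitian endomorphism $h$ as a matrix $(h_{ij})$ with $h_{11},h_{22}\in\mathbb{R}$ and $h_{21}=\bar h_{12}$, a direct computation gives
\begin{gather}
\mathrm{tr}(h)=h_{11}+h_{22},\qquad \mathrm{tr}(h\phi\otimes\phi^{*_H})=ph_{11},\qquad \mathrm{tr}(h\phi\otimes\phi^{*_H}h)=p(h_{11}^2+|h_{12}|^2).
\end{gather}

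Next I would substitute these into the claimed inequality and invoke the criticality condition $2\alpha=\frac{1}{N\tau}$ to eliminate $\alpha$. After multiplying through by the positive quantity $N\tau$ and peeling off the $|h_{12}|^2$ contribution, the inequality of Lemma \ref{lem:nonnegativesecondintegrand} becomes
\begin{gather}
N\tau p|h_{12}|^2+\bigl[(N\tau p+\tau^2-p^2)h_{11}^2+2\tau^2 h_{11}h_{22}+\tau^2 h_{22}^2\bigr]\geq 0.
\end{gather}
The first summand is manifestly nonnegative, so the task reduces to establishing positive semidefiniteness of the real quadratic form $Q(x,y)=(N\tau p+\tau^2-p^2)x^2+2\tau^2 xy+\tau^2 y^2$.

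This last step is the only nontrivial part, and it is exactly where the a priori bound from Lemma \ref{lem:fundamentalq11estimate} enters. I would apply the $2\times 2$ principal minor test: the $(2,2)$-entry of the Gram matrix is $\tau^2>0$, the $(1,1)$-entry equals $N\tau p+(\tau-p)(\tau+p)$, which is nonnegative because $p\leq\tau$, and the determinant simplifies to $\tau^2 p(N\tau-p)$, again nonnegative since $0\leq p\leq \tau\leq N\tau$. At the isolated zeros of $\phi$ (where $p=0$) the inequality degenerates to $\tau(h_{11}+h_{22})^2/N\geq 0$, which is trivial, so the pointwise bound extends to all of the sphere by continuity. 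The main obstacle is purely algebraic: keeping track of how criticality, together with the pointwise bound $\Vert\phi\Vert_H^2\leq\tau$, conspires to make $Q$ positive semidefinite.
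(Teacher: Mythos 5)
Your proposal is correct and follows essentially the same route as the paper: diagonalise $\phi\otimes\phi^{*_H}$ in an $H$-orthonormal frame, peel off the manifestly nonnegative $|h_{12}|^2$ contribution, and use criticality $2\alpha\tau=\frac{1}{N}$ together with the pointwise bound $\Vert\phi\Vert_H^2\leq\tau$ from Lemma \ref{lem:fundamentalq11estimate} to control the remaining quadratic form in the diagonal entries of $h$. The only cosmetic difference is that you package the final algebra as a principal-minor test on the Gram matrix (after factoring the product as a difference of squares), whereas the paper estimates the quartic term $-2\alpha\lambda_2^2\Vert\phi\Vert^4$ directly; both computations are equivalent.
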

\begin{proof}
Choose a trivialisation where $H$ is identity and diagonalise $\phi\otimes \phi^{*_H}$. It has eigenvalues $0$ and $\Vert \phi \Vert^2 \leq \tau$ (by Lemma \ref{lem:fundamentalq11estimate}). Denote the diagonal entries of $h$ in this trivialisation as $\lambda_1, \lambda_2$ respectively. The expression in the statement of the lemma is equal to the following.
\begin{gather}
\lambda_2^2 \Vert \phi \Vert^2+\frac{\tau}{N} (\lambda_1+\lambda_2)^2+\left(2\alpha \tau-\frac{1}{N} \right) (\lambda_1+\lambda_2)\lambda_2 \Vert \phi \Vert^2 -2\alpha \lambda_2^2 \Vert \phi \Vert^4 \nonumber \\
\geq \frac{\tau}{N} (\lambda_1+\lambda_2)^2 +\lambda_2^2 \Vert \phi \Vert^2 \left( 1-\frac{1}{N}\right) \nonumber\\
\geq 0.
\end{gather}
\end{proof}
Hence, $h$ is a parallel Hermitian endomorphism of $E$ and thus a constant matrix of functions. The symmetries of $h$ imply that it is a diagonal constant matrix. Moreover, Lemma \ref{lem:nonnegativesecondintegrand} shows that the integrand is zero throughout, including when at the origin. Hence $tr(h)=0 \ \Rightarrow \ h_{11}=-h_{22}$. We now see that the following equation holds (using Lemma \ref{lem:fundamentalq11estimate}).
\begin{gather}
0=h_{11}^2(\vert \phi_1 \vert^2+ \vert \phi_2 \vert^2)-2\alpha h_{11}^2(\vert \phi_1\vert^2-\vert \phi_2 \vert^2)^2 \nonumber \\
\geq h_{11}^2 (\vert \phi_1 \vert^2+ \vert \phi_2 \vert^2) \left(1-2\alpha \tau \right)
\geq 0,
\end{gather}
because $2\alpha \tau =\frac{1}{N}\leq 1$. Since the strict inequality holds at least one point (because $0<\Vert \phi \Vert^2 < \tau$ near the origin), $h_{11}=h_{22}=0$. Hence openness is proven.
\section{Closedness}\label{sec:continuityclosedness}
\indent The constant $C$ in this section is independent of $s\in[0,1]$ but can vary from line-to-line. We first estimate $\psi$. The equation satisfied by it is
\begin{gather}
\psi^{''}=\lambda_s (2\tau-q_{11})e^{-2\alpha q_{11}-\psi/N}.
\label{eq:forpsi}
\end{gather}
Since $q_{11}\leq \tau$, $\psi$ is convex. The boundary conditions clearly imply that $\psi'\rightarrow \pm N$ as $t\rightarrow \pm \infty$ respectively. Thus $e^{-\psi}$ decreases exponentially at $\pm \infty$ at a fixed rate of $N$. Let $t_0$ be the unique point (of global minimum) where $\psi'(t_0)=0$. Without loss of generality (by subtraction), assume that $t_0= 0$. Multiplying \ref{eq:forpsi} by $\psi'$ and integrating, we see that
\begin{gather}
N^2=2\int_{0}^{\infty} \lambda_s (2\tau-q_{11})e^{-2\alpha q_{11}-\psi/N} \psi' \geq \frac{1}{C} e^{-\psi(0)/N}, \ \mathrm{and} \nonumber \\
N^2 \leq C e^{-\psi(0)/N}.
\label{ineq:forpsizero}
\end{gather}
Thus $\frac{1}{C}\leq e^{-\psi(0)/N}\leq C$. At this juncture, we prove some properties of the solution.
\begin{proposition}
Suppose there is a smooth solution $q$ to \ref{eq:forqintermsofs} satisfying the boundary conditions \ref{eq:boundaryintermsofs}. Then $q$ satisfies the following properties.
\begin{enumerate}
\item $\Vert q \Vert \leq C_s e^{-lt}$ near $\infty$ and $\Vert q \Vert \leq C_s e^{lt}$ near $-\infty$.
\item Let $q^{-1}(0)q'(0)=\left[\begin{array}{cc} a & b \\ c & -a \end{array} \right ]$. Then $a,b,c$ are real. Moreover, $N-l>c\pm a >-l$, $c>0$, $q_{12}$ is purely real, and $q_{12}(0)<0$.
\item Suppose $s_n\rightarrow s$. Then passing to a subsequence, $q_n$ converges smoothly on compact sets to a positive-definite limit $q$. Moreover, $\psi'(\pm \infty)=\pm N$.
\end{enumerate}
\label{prop:necessary}
\end{proposition}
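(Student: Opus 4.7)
The strategy is to handle the three claims with distinct tools: part (1) is a direct algebraic bound using $q=P^{\dag}HP$ and the extension conditions in Lemma \ref{lem:extensionofH}, part (2) exploits a conserved quantity of the ODE together with the asymptotic data from \ref{eq:boundaryintermsofs}, and part (3) is an Arzel\`a--Ascoli argument built on the uniform estimates already in hand. For (1), bound $\Vert q\Vert\leq\Vert P\Vert^{2}\Vert H\Vert$: since the entries of $P$ are $z^{l}$ and $z^{N-l}$ with $l\leq N-l$, $\Vert P\Vert\leq C\vert z\vert^{l}$ near $0$ and $\Vert P\Vert\leq C\vert z\vert^{N-l}$ near $\infty$, while Lemma \ref{lem:extensionofH} gives $\Vert H\Vert\leq C$ near $z=0$ and $\Vert H\Vert\leq C\vert z\vert^{-2N}$ near $z=\infty$; multiplying and rewriting in $t=\ln\vert z\vert^{2}$ produces the $Ce^{\pm lt}$ decay.

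For (2), set $M=q^{-1}q'$. The $(2,1)$-entry of \ref{eq:forqintermsofs} reads $M'_{21}=0$, so $M_{21}$ is constant along the flow. Converting the boundary conditions \ref{eq:boundaryintermsofs} at infinity yields $PMP^{-1}\to\mathrm{diag}(l,N-l)$ as $t\to-\infty$ and analogously at $t\to+\infty$; conjugating back by $P^{-1}$ produces a remarkably $z$-independent matrix, and a direct calculation shows both asymptotic matrices $M(\pm\infty)$ have $(2,1)$-entry equal to $(N-2l)/2$. Hence $c=M_{21}(0)=(N-2l)/2>0$ (strictness using $l<N-l$, which is forced by the admissibility of the volume). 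The identity $\Tr M(0)=-\psi'(0)=0$ gives $d=-a$, and the same asymptotic computation gives $M(\pm\infty)_{11}=\pm N/2$, so $a(t)$ continuously interpolates between $\pm N/2$, forcing $\vert a(0)\vert<N/2$; combined with $c=(N-2l)/2$, this is exactly $N-l>c\pm a>-l$.

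Reality of $q_{12}$, from which reality of $a$ and $b$ follows via the explicit formulas for the entries of $M$, is the subtlest point. I plan to decompose $q_{12}=u+iv$ and verify that \ref{eq:forqintermsofs} descends to a linear homogeneous second-order equation for $v$ with $v\to 0$ at $\pm\infty$; a Wronskian/energy argument using the decay weights, together with the fact that $\bar q$ satisfies the same system, then forces $v\equiv 0$. With $q_{12}$ real, integrating the $(1,2)$-ODE $M'_{12}=\lambda_{s}q_{12}e^{-2\alpha q_{11}-\psi/N}$ over $\mathbb{R}$ yields $M_{12}(+\infty)-M_{12}(-\infty)=0$, so $q_{12}$ must change sign; its sign at $t=0$ is then pinned down from the explicit form $q(1)=P(1)^{\dag}H(1)P(1)$ together with the location of the extremum of $M_{12}$, giving $q_{12}(0)<0$.

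For (3), $q_{11,n}\leq\tau$ (Lemma \ref{lem:fundamentalq11estimate}), the uniform decay from (1), and the two-sided bound on $e^{-\psi_{n}(0)/N}$ from \ref{ineq:forpsizero} give uniform control on $q_{n}$ and, by bootstrapping from \ref{eq:forqintermsofs}, on all derivatives on compact sets. Arzel\`a--Ascoli then supplies a subsequential smooth limit $q$ solving the ODE; positivity is inherited from the uniform lower bound on $\det q_{n}(0)=e^{-\psi_{n}(0)}$ and propagates along the flow. Since $\psi_{n}''=\lambda_{s_{n}}(2\tau-q_{11,n})e^{-2\alpha q_{11,n}-\psi_{n}/N}$ is positive and $q_{11,n}\to 0$ at the tails uniformly by (1), $\psi_{n}'$ converges uniformly to $\pm N$ and hence $\psi'(\pm\infty)=\pm N$ for the limit. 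The principal obstacle is the reality argument for $q_{12}$ in (2).
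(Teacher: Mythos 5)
Part (1) of your proposal is essentially the paper's argument (bounding $q=P^{\dag}HP$ by the known growth of $P$ and of the smoothly extending $H$), and is fine. The fatal problem is in part (2), in the determination of $c$. You are right that $(q^{-1}q')_{21}$ is constant, but your claim that $c=(N-2l)/2$ is false. The step that fails is ``conjugating back by $P^{-1}$'': the boundary conditions \ref{eq:boundaryintermsofs} only say that $Pq^{-1}q'P^{-1}$ minus a diagonal matrix lies in $L^2$ near each end, and conjugation by $P^{-1}$ multiplies the off-diagonal entries by $z^{\pm(N-2l)}$, one of which blows up exponentially at each end. So an $L^2$-small error in $PMP^{-1}$ does not give an entrywise limit for $M=q^{-1}q'$, and the constant $(2,1)$-entry $c$ cannot be read off as the $(2,1)$-entry of $P^{-1}\mathrm{diag}(l,N-l)P$. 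What the boundary conditions actually force is the vanishing at the appropriate end of the \emph{coefficients} of the blowing-up factors; writing this out (as in Equations \ref{eq:bounddiagonaltermsnecessary}--\ref{eq:boundwhen2lleNnecessary} of the paper) yields the integral identity $c=\frac{N-2l}{2}-\frac14\int_{-\infty}^{\infty}\lambda_s q_{11}e^{-2\alpha q_{11}-\psi/N}\,du = N-l-\frac{\tau V}{4}$. In particular $c=(N-2l)/2$ would correspond to $q_{11}\equiv 0$, i.e.\ to the excluded boundary value $V=2N/\tau$ of admissibility. The positivity $c>0$ is therefore \emph{not} a consequence of $l<N-l$; it requires the stability condition $\frac{\tau V}{2}<2N-2l$, which your argument never invokes. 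Relatedly, your reality argument for $q_{12}$ is only announced (``I plan to\dots''), not carried out; the paper's route is to first get $a,c$ real from the integral formulas (they involve only $q_{11}$), then use the integro-differential form of the equation to show the imaginary part $y$ satisfies $yq_{11}'=q_{11}y'$ and that $y(0)\neq 0$ would force $c=0$, contradicting $c>0$ --- so the correct value and sign of $c$ is also the engine behind reality and behind $q_{12}(0)<0$ (via $\int \lambda_s q_{12}e^{-2\alpha q_{11}-\psi/N}=0$).

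Part (3) also has a gap: a lower bound on $\det q_n(0)=e^{-\psi_n(0)}$ does not make the limit positive-definite, since $(q_{11})_n(0)$ could tend to $0$ while $(q_{22})_n(0)$ blows up. The paper rules this out by showing that if $(q_{11})_n(0)\to 0$ then $c_n\to\frac{N-2l}{2}\neq 0$, while integrating \ref{eq:convexityofq11pluspsiby2} gives the uniform bound $\int e^{-\psi_n}/(q_{11})_n^2\le C$, a contradiction; separate work (Equation \ref{eq:theeqforq22n}) is needed to control $(q_{22})_n$ at all. Also note that the constants $C_s$ in part (1) are not claimed to be uniform in $s$, so ``uniform decay of $q_{11,n}$ at the tails'' cannot be cited to get $\psi_n'(\pm\infty)\to\pm N$; the paper instead uses the uniform two-sided bounds \ref{ineq:fundestimateonpsindashdct} and \ref{ineq:q11intermsof110} together with dominated convergence applied to $\psi_n'(\pm\infty)=\pm\int_0^{\pm\infty}\psi_n''$.
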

\begin{proof}
\begin{enumerate}
\item Note that
\begin{gather}
P^{-1}=\frac{1}{2z^N} \left[\begin{array}{cc} z^{N-l} & z^l \\ -z^{N-l} & z^{l} \end{array} \right ] \nonumber\\
(P^{\dag})^{-1}=\frac{1}{2\bar{z}^N} \left[\begin{array}{cc} \bar{z}^{N-l} &  -\bar{z}^{N-l} \\ \bar{z}^l & \bar{z}^{l} \end{array} \right ].
\label{eq:forPinverse}
\end{gather}
Thus,
\begin{gather}
(P^{\dag})^{-1}qP^{-1}=\frac{1}{4\vert z\vert^{2N}} \left[\begin{array}{cc} \bar{z}^{N-l} &  -\bar{z}^{N-l} \\ \bar{z}^l & \bar{z}^{l} \end{array} \right ] \left[\begin{array}{cc} q_{11} & q_{12} \\ \bar{q}_{12} & q_{22} \end{array} \right ] \left[\begin{array}{cc} z^{N-l} & z^l \\ -z^{N-l} & z^{l} \end{array} \right ] \nonumber \\
=\frac{1}{4\vert z\vert^{2N}} \left[\begin{array}{cc} \bar{z}^{N-l} &  -\bar{z}^{N-l} \\ \bar{z}^l & \bar{z}^{l} \end{array} \right ]\left[\begin{array}{cc} (q_{11} -q_{12})z^{N-l} & (q_{11} +q_{12})z^{l} \\ (\bar{q}_{12}-q_{22})z^{N-l} & (\bar{q}_{12}+q_{22})z^{l} \end{array} \right ] \nonumber \\
=\frac{1}{4\vert z\vert^{2N}} \left[\begin{array}{cc} (q_{11} -q_{12}-\bar{q}_{12}+q_{22})\vert z\vert^{2(N-l)} & (q_{11} +q_{12}-\bar{q}_{12}-q_{22})z^{l}\bar{z}^{N-l} \\ (q_{11} -q_{12}+\bar{q}_{12}-q_{22})\bar{z}^{l}z^{N-l}  & (q_{11} +q_{12}+\bar{q}_{12}+q_{22})\vert z\vert^{2l} \end{array} \right ].
\label{eq:PdagqP}
\end{gather}
From these expressions and \ref{eq:iflimitsextension}, it easily follows that $q$ decreases exponentially at the rate of at least $l$. Indeed, each entry of the matrix above decays exponentially at that rate (noting that $2l<N$). Thus, upon ``solving" for $q_{ij}$, we see that so does $q$.
\item Let $W=\left[\begin{array}{cc} u&v \\ w&x \end{array} \right]$. Then
\begin{gather}
PWP^{-1}=  \frac{1}{2z^N}\left[\begin{array}{cc} z^{l} & -z^l \\ z^{N-l} & z^{N-l} \end{array} \right ] \left[\begin{array}{cc} u & v \\ w & x \end{array} \right ] \left[\begin{array}{cc} z^{N-l} & z^l \\ -z^{N-l} & z^{l} \end{array} \right ]\nonumber \\
=\frac{1}{2z^N}\left[\begin{array}{cc} z^{l} & -z^l \\ z^{N-l} & z^{N-l} \end{array} \right ] \left[\begin{array}{cc} (u-v)z^{N-l} & (u+v)z^l \\ (w-x)z^{N-l} & (w+x)z^l \end{array} \right ] \nonumber \\
=\frac{1}{2}\left[\begin{array}{cc} u-v-w+x & (u+v-w-x)z^{2l-N} \\ (u-v+w-x)z^{N-2l} & u+v+w+x \end{array} \right ].
\label{eq:generalPWPinv}
\end{gather}
At this point we note that upon integration of \ref{eq:forqintermsofs}, the following equation is obtained.
\begin{gather}
q'(t)= q(t)q^{-1}(0)q'(0)+ q(t) \int_0^t \lambda_s \left(\left[\begin{array}{cc}q_{11}(u)-\tau & q_{12}(u)  \\ 0 & -\tau \end{array} \right]\right )e^{-2\alpha q_{11}(u)-\psi(u)/N}du.
\label{eq:uponintegrationforq}
\end{gather}
Using \ref{eq:generalPWPinv} and \ref{eq:uponintegrationforq} we arrive at the following.
\begin{gather}
Pq^{-1}q' P^{-1}-\frac{1}{2}\left[\begin{array}{cc} -b-c & (2a+b-c)z^{2l-N} \\ (2a-b+c)z^{N-2l} & b+c \end{array} \right ]\nonumber \\
=\frac{1}{2}\int_0^t \lambda_s \left[\begin{array}{cc} q_{11}(u)-2\tau-q_{12}(u) & (q_{11}(u)+q_{12}(u))z^{2l-N} \\ (q_{11}(u)-q_{12}(u))z^{N-2l} & q_{11}(u)-2\tau+q_{12}(u) \end{array} \right ]  e^{-2\alpha q_{11}(u)-\psi(u)/N}du.
\end{gather}
The boundary conditions coupled with exponential decay imply the following.
\begin{gather}
-N+l+\frac{b+c}{2}=\frac{1}{2}\int_0^{\infty}\lambda_s (q_{11}(u)-2\tau-q_{12}(u))e^{-2\alpha q_{11}(u)-\psi(u)/N}du \nonumber\\
-l-\frac{b+c}{2}=\frac{1}{2}\int_0^{\infty} \lambda_s (q_{11}(u)-2\tau+q_{12}(u))e^{-2\alpha q_{11}(u)-\psi(u)/N}du \nonumber \\
l+\frac{b+c}{2}=-\frac{1}{2}\int_{-\infty}^{0}\lambda_s (q_{11}(u)-2\tau-q_{12}(u))e^{-2\alpha q_{11}(u)-\psi(u)/N}du \nonumber\\
N-l-\frac{b+c}{2}=-\frac{1}{2}\int_{-\infty}^{0}\lambda_s (q_{11}(u)-2\tau+q_{12}(u))e^{-2\alpha q_{11}(u)-\psi(u)/N}du.
\label{eq:bounddiagonaltermsnecessary}
\end{gather}
Adding the first and the fourth equations, and subtracting the sum of the second and the third equations in \ref{eq:bounddiagonaltermsnecessary}, we see that the following identity holds.
\begin{gather}
\displaystyle \int_{-\infty}^{\infty} \lambda_s q_{12}(u)e^{-2\alpha q_{11}(u)-\psi(u)/N}du =0.
\label{eq:zerointegralofq12}
\end{gather}
For the off-diagonal terms, since $2l<N$,
\begin{gather}
-\frac{2a-b+c}{2}=\frac{1}{2}\int_0^{\infty} \lambda_s(q_{11}(u)-q_{12}(u))e^{-2\alpha q_{11}(u)-\psi(u)/N}du \nonumber \\
-\frac{2a+b-c}{2}=-\frac{1}{2}\int_{-\infty}^{0}\lambda_s (q_{11}(u)+q_{12}(u))e^{-2\alpha q_{11}(u)-\psi(u)/N}du
\label{eq:boundwhen2lleNnecessary}.
\end{gather}
In fact,
\begin{gather}
\left\vert \int_t^{\infty} \lambda_s(q_{11}(u)-q_{12}(u))e^{-2\alpha q_{11}(u)-\psi(u)/N}du \right\vert \leq C_s e^{(2l-N)t/2} \ as \ t\rightarrow \infty \nonumber \\
\left\vert \int_{-\infty}^{t}\lambda_s (q_{11}(u)+q_{12}(u))e^{-2\alpha q_{11}(u)-\psi(u)/N}du \right\vert \leq C_s e^{(N-2l)t/2} \ as \ t\rightarrow -\infty
\label{ineq:strongeroffdiagonalboundary}
\end{gather}
From the above equations and $q_{11}\leq \tau$, we see that $a$ and $c$ are real, and satisfy $N-l>c\pm a >-l$. Indeed,
\begin{gather}
a= -\frac{1}{4}\left(\int_0^{\infty} \lambda_s q_{11}(u)e^{-2\alpha q_{11}(u)-\psi(u)/N}du - \int_{-\infty}^{0} \lambda_s q_{11}(u)e^{-2\alpha q_{11}(u)-\psi(u)/N}du \right) \nonumber \\
-2N= \int_{-\infty}^{\infty} \lambda_s (q_{11}(u)-2\tau)e^{-2\alpha q_{11}(u)-\psi(u)/N}du\nonumber \\
\leq -\tau  \int_{-\infty}^{\infty} \lambda_s e^{-2\alpha q_{11}(u)-\psi(u)/N}du \nonumber \\
b-c=\frac{1}{2}\int_{-\infty}^{\infty} \lambda_s q_{11}(u)e^{-2\alpha q_{11}(u)-\psi(u)/N}du -\int_0^{\infty} \lambda_s q_{12}(u)e^{-2\alpha q_{11}(u)-\psi(u)/N}du \nonumber \\
b+c= N-2l -\int_0^{\infty} \lambda_s q_{12}(u)e^{-2\alpha q_{11}(u)-\psi(u)/N}du \nonumber \\
\Rightarrow c=\frac{N-2l}{2}-\frac{1}{4} \int_{-\infty}^{\infty} \lambda_s q_{11}(u)e^{-2\alpha q_{11}(u)-\psi(u)/N}du \nonumber \\
\Rightarrow c=N-l-\frac{\tau}{2}\int_{-\infty}^{\infty} \lambda_s e^{-2\alpha q_{11}(u)-\psi(u)/N}du
\label{eq:explicitaandc}
\end{gather}
At this juncture, we note that since the solvability of the non-abelian vortex equation implies that $\frac{\tau V}{2}<2N-2l$ \cite{AlGaGa3},
\begin{gather}
c=N-l-\frac{\tau}{4} V>0.
\label{ineq:cispositive}
\end{gather}
We have used the expression for $\omega$ from Equation \ref{eq:foromega}, and integrated it over the Riemann surface to bring $V$ into the picture in Equation \ref{ineq:cispositive}. Let $q_{12}(s)=x(s)+\sqrt{-1}y(s)$. Equation \ref{eq:uponintegrationforq} can be expanded as follows.
\begin{gather}
\frac{1}{\det(q)}\left[\begin{array}{cc}q_{22} & -q_{12} \\ -\bar{q}_{12} & q_{11}\end{array} \right] \left[\begin{array}{cc} q_{11}' & q_{12}' \\ \bar{q}_{12}' & q_{22}'\end{array} \right ] = \left[\begin{array}{cc} a & b \\ c & -a \end{array} \right ] + \displaystyle  \int_0^t \lambda_s\left[\begin{array}{cc}q_{11}(u)-\tau & q_{12}(u)  \\ 0 & -\tau \end{array} \right]e^{-2\alpha q_{11}(u)-\psi(u)/N}du \nonumber \\
\Rightarrow \frac{1}{\det(q)}\left[\begin{array}{cc}q_{22} q_{11}'-q_{12}\bar{q}_{12}' & q_{22}q_{12}'-q_{12}q_{22}' \\ -\bar{q}_{12}q_{11}'+q_{11}\bar{q}_{12}' & -\bar{q}_{12}q_{12}'+q_{11}q_{22}'\end{array} \right] = \left[\begin{array}{cc} a & b \\ c & -a \end{array} \right ] + \nonumber \\
 \displaystyle  \int_0^t \lambda_s\left[\begin{array}{cc}q_{11}(u)-\tau & q_{12}(u)  \\ 0 & -\tau \end{array} \right]e^{-2\alpha q_{11}(u)-\psi(u)/N}du.
\label{eq:theintegrodiffequationforq}
\end{gather}
Since $a,c$ are real, we see that the following hold.
\begin{gather}
yx'=xy' \nonumber \\
\Rightarrow yx(0)=xy(0) \nonumber \\
y q_{11}'=q_{11} y' \nonumber \\
\Rightarrow y q_{11}(0)=q_{11} y(0).
\label{eq:sinceaandcarereal}
\end{gather}
Moreover,
\begin{gather}
\frac{q_{11}x'-x q_{11}'}{\det(q)}=c \nonumber \\
\Rightarrow xq_{11}(0)=x(0)q_{11}+q_{11} q_{11}(0) c \displaystyle \int_0 ^t \frac{\det(q)(u)}{q_{11}^2(u)}du.
\label{eq:therealpartforc}
\end{gather}
There are two cases.
\begin{enumerate}
\item $y(0)=0$: In this case, $y=0$ identically and hence $b$ is real.
\item $y(0)\neq 0$: Equations \ref{eq:sinceaandcarereal} and \ref{eq:therealpartforc} imply that $c=0$. This contradicts \ref{ineq:cispositive}.
\end{enumerate}
Hence, $y(0)=0$ and therefore $y$ is identically zero. This fact means that $b$ and $q_{12}$ are real. From \ref{eq:therealpartforc} and \ref{eq:zerointegralofq12} we see that $x(0)<0$. Indeed, if $x(0)>0$, then $x>0$ and hence its integral against any positive measure cannot be zero (thus contradicting \ref{eq:zerointegralofq12}). If $x(0)=0$, then $x$ is identically zero and hence so is $q_{12}$. This means that $c=0$ which is a contradiction.
\item In terms of $q_{11}$, the Bochner-type identity \ref{eq:thebochneridentity} is equivalent to
\begin{gather}
q_{11}''=\lambda_s q_{11}(q_{11}-\tau)e^{-2\alpha q_{11}-\psi/N}+\frac{(q_{11}')^2}{q_{11}}+\frac{(q_{11}'q_{12}- q_{12}'q_{11})^2}{\det(q) q_{11}} \nonumber \\
\Rightarrow q_{11}''=\lambda_s q_{11}(q_{11}-\tau)e^{-2\alpha q_{11}-\psi/N}+\frac{(q_{11}')^2}{q_{11}}+\frac{c^2 e^{-\psi}}{q_{11}},
\label{eq:bochnerintermsofq}
\end{gather}
where the last equality follows from Equation \ref{eq:therealpartforc}. We simplify further.
\begin{gather}
(\ln(q_{11}))''=\lambda_s (q_{11}-\tau)e^{-2\alpha q_{11}-\psi/N}+c^2\frac{e^{-\psi}}{q_{11}^2}.
\label{eq:simplifyforq11derivativesfrombochner}
\end{gather}
From \ref{eq:simplifyforq11derivativesfrombochner} and \ref{eq:forpsi} we see that
\begin{gather}
\left(\ln(q_{11})+\frac{\psi}{2}\right)''=\lambda_s \frac{q_{11}}{2}e^{-2\alpha q_{11}-\psi/N}+c^2\frac{e^{-\psi}}{q_{11}^2} >0.
\label{eq:convexityofq11pluspsiby2}
\end{gather}
Thus the $\displaystyle \lim_{t\rightarrow \infty} \left(\ln(q_{11})+\frac{\psi}{2}\right)'=A_+$ limit exists (as possibly an extended real number) and likewise for $A_{-}$ at $-\infty$. Since the limit of $\psi'$ exists, so does that of $(\ln(q_{11}))'$. Since $q_{11}\leq \tau$, this limit is $\leq 0$ at $\infty$ and $\geq 0$ at $-\infty$. Now we consider a sequence of $q_n$ corresponding to $s_n$. Since $\psi_n ''$ is bounded uniformly and so is $\psi_n(0)$, passing to a subsequence $\psi_n\rightarrow \psi$ in $C^{1,\beta}_{loc}$. Now
\begin{gather}
\psi'(t)\geq \psi_n'(1)\geq \frac{1}{C} \ \forall \ t\geq 1 \nonumber \\
\psi'(t)\leq  \psi_n'(-1)\leq \frac{-1}{C} \ \forall \ t \leq -1,
 \label{ineq:fundestimateonpsindashdct}
\end{gather}
 because $\psi_n''$ is bounded below on any compact set.
Now
\begin{gather}
-N\leq -\frac{\psi'_n}{2}+(A_-)_n\leq (\ln(q_{11})_n)'\leq -\frac{\psi'_n}{2}+(A_+)_n \leq N,
\label{ineq:estimateonlnq11dash}
\end{gather}
and hence
\begin{gather}
(q_{11})_n(0)e^{-Nt}\leq (q_{11})_n(t)\leq (q_{11})_n(0)e^{Nt}.
\label{ineq:q11intermsof110}
\end{gather}
 Thus $q_{11}$ is bounded in $C^{1}_{loc}$ and hence a subsequence converges to $q_{11}$ in $C^{0,\beta}_{loc}$. If $(q_{11})(0)$ is $0$, then $q_{11}$ is $0$ identically. From the expression for $c$ in \ref{eq:explicitaandc} we see that using the dominated convergence theorem that $c_n\rightarrow c=\frac{N-2l}{2}\neq 0$. Integrating both sides of \ref{eq:convexityofq11pluspsiby2} we see that $\displaystyle \int_{-\infty}^{\infty} \frac{e^{-\psi_n}}{(q_{11})_n^2} \leq C$. Thus $(q_{11})_n(0)$ cannot approach $0$. Therefore, $q_{11}>0$ everywhere. Equation \ref{eq:bochnerintermsofq} shows that $(q_{11})_n$ is $C^{2}_{loc}$. Thus $q_{11}$ is $C^{1,\beta}$ and $\psi$ is thus in $C^{3,\beta}$ and we can bootstrap to conclude that $q_{11}$ and $\psi$ are smooth. The dominated convergence theorem applied to
\begin{gather}
\displaystyle \psi_n'(\pm\infty) =\pm\int_0^{\infty} \psi_n'' = \pm\int_0 ^{\infty} \lambda_{s_n} (2\tau-(q_{11})_n)e^{-2\alpha (q_{11})_n-\psi_n/N}
\end{gather}
shows that $\psi'(\pm \infty)=\pm N$. We also see (by integrating \ref{eq:convexityofq11pluspsiby2}) that in the limit
\begin{gather}
\displaystyle \int_{-\infty}^{\infty} c^2\frac{e^{-\psi}}{q_{11}^2}<\infty.
\label{ineq:forc2integrallimit}
\end{gather}
Moreover, since $e^{-\psi}$ decays exponentially,
\begin{gather}
\displaystyle \int_{-\infty}^{\infty} e^{-2\alpha q_{11}-\psi/N}<\infty
\label{ineq:finitevolumelimit}
\end{gather}
Using \ref{ineq:finitevolumelimit}, \ref{eq:therealpartforc}, and \ref{eq:zerointegralofq12}, we see that $x_n(0)=(q_{12})_n(0)$ is bounded and hence (upto a subsequence) $x_n$ converges smoothly on compact sets to a limit $x$. Since $e^{-\psi_n}(0)+x_n(0)^2=(q_{11})_n(0)(q_{22})_n(0)$, we see using Equation \ref{eq:theintegrodiffequationforq}, that $(q_{22})_n'(0)$ is bounded. Moreover, using \ref{eq:theintegrodiffequationforq}, we see that
\begin{gather}
(q_{22})_n' -\frac{(q_{11})_n'}{(q_{11})_n}(q_{22})_n=-\frac{\det(q_n)}{(q_{11})_n}\left(2a_n+ \displaystyle \int_0^ t \lambda_{s_n} (q_{11})_n e^{-2\alpha (q_{11})_n-\psi_n/N}\right).
\label{eq:theeqforq22n}
\end{gather}
We can explicitly solve Equation \ref{eq:theeqforq22n} to conclude bounds on $(q_{22})_n$ and hence (up to a subsequence) smooth convergence on compact sets to a limit $q_{22}$. Moreover, $q_{22}>0$ (because $\det(q)+x^2=q_{11}q_{22}$) and since $\det(q)>0$, $q$ is positive-definite.
\end{enumerate}
\end{proof}
\indent We now investigate the boundary behaviour of the limiting $q$ obtained from the last part of Proposition \ref{prop:necessary}. \\
We first observe (using the the dominated convergence theorem applied to the explicit expression for $a_n$ (Equation \ref{eq:explicitaandc})) that
\begin{gather}
2a+ \displaystyle \int_0^ {\infty} \lambda_{s} q_{11} e^{-2\alpha q_{11}-\psi/N}>0 \nonumber \\
2a+\displaystyle \int_0^ {-\infty} \lambda_{s} q_{11} e^{-2\alpha q_{11}-\psi/N}<0.
\label{ineq:onaplusforq22}
\end{gather}
Using \ref{eq:theintegrodiffequationforq} and \ref{ineq:onaplusforq22} we see that
\begin{gather}
(q_{22})' -\frac{(q_{11})'}{(q_{11})}(q_{22})=-\frac{\det(q)}{(q_{11})}\left(2a+ \displaystyle \int_0^ t \lambda_{s} q_{11} e^{-2\alpha q_{11}-\psi/N}\right) \nonumber \\
\leq 0 \ \mathrm{if} \ t\geq K, \nonumber \\
\geq 0 \ \mathrm{if} \ t\leq -K.
\label{eq:theeqforq22}
\end{gather}
Thus for large enough $K$,
\begin{gather}
q_{11} \frac{q_{22}(-K)}{q_{11}(-K)} \leq q_{22}(t) \ \forall \ t\leq -K, \\
q_{22}(t)\leq q_{11} \frac{q_{22}(K)}{q_{11}(K)} \ \forall \ t\geq K.
\label{ineq:comparingq11andq22}
\end{gather}
In fact, since $2a+ \displaystyle \int_0^ K \lambda_{s} q_{11} e^{-2\alpha q_{11}-\psi/N}>0$ we see that for all large enough $n$ (independent of $K$), $2a_n+ \displaystyle \int_0^ K \lambda_{s_n} (q_{11})_n e^{-2\alpha (q_{11})_n-\psi_n/N}>0$. Since this function is increasing,
\begin{gather}
(q_{11})_n \frac{(q_{22})_n(-K)}{(q_{11})_n(-K)} \leq (q_{22})_n(t) \ \forall \ t\leq -K, \\
(q_{22})_n(t) \leq (q_{11})_n \frac{(q_{22})_n(K)}{(q_{11})_n(K)} \ \forall \ t\geq K.
\label{ineq:comparingq11nandq22n}
\end{gather}
This observation implies that
\begin{gather}
(q_{11})_n^2 \frac{(q_{22})_n(-K)}{(q_{11})_n(-K)} \leq x_n^2 (t) \ \forall \ t\leq -K, \\
x_n^2 (t) \leq (q_{11})_n^2 \frac{(q_{22})_n(K)}{(q_{11})_n(K)} \ \forall \ t\geq K.
\label{ineq:comparingxnq11n}
\end{gather}
These observations imply that the dominated convergence theorem can be used with impunity to justify that the integral expression of  $b_n$ converges to that of $b$ (for instance). We now prove the following lemma.
\begin{lemma}\label{lem:finitenessofintegralofdetqbyq112}
\begin{gather}
\displaystyle \int_{-\infty}^{\infty}\frac{\det(q)}{q_{11}^2}<\infty.
\label{ineq:aftercneq0}
\end{gather}
Hence $A_+\geq 0$, $A_-\leq 0$.
\end{lemma}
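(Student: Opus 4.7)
The plan is to derive both assertions directly from two facts already in hand: the inequality $\int c^2 e^{-\psi}/q_{11}^2\,dt<\infty$ from \ref{ineq:forc2integrallimit}, and the strict positivity $c=N-l-\tau V/4>0$ established in \ref{ineq:cispositive}. Since $e^{-\psi}=\det(q)$ by definition of $\psi$, dividing through by the positive constant $c^2$ will immediately yield $\int_{-\infty}^{\infty}\det(q)/q_{11}^2\,dt<\infty$, which is \ref{ineq:aftercneq0}.

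For the sign conditions on $A_\pm$, I would set $f=\ln(q_{11})+\psi/2$, so that \ref{eq:convexityofq11pluspsiby2} reads $f''>0$ strictly, and the two limits of interest are $f'(\pm\infty)=A_\pm$. Strict convexity gives $f'(t)\leq A_+$ and $f'(t)\geq A_-$ for every $t$. Assuming for contradiction that $A_+<0$ and integrating $f'(s)\leq A_+$ on $[0,t]$ gives $f(t)\leq f(0)+A_+ t$, so
\[
\frac{\det(q)}{q_{11}^2}(t)=e^{-2f(t)}\geq e^{-2f(0)}e^{-2A_+ t},\qquad t\geq 0.
\]
Since $-2A_+>0$, the right-hand side has infinite integral on $[0,\infty)$, contradicting the integrability just established; hence $A_+\geq 0$. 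The argument for $A_-\leq 0$ is symmetric: if $A_->0$, then integrating $f'(s)\geq A_-$ on $[t,0]$ for $t<0$ gives $f(t)\leq f(0)+A_- t\to -\infty$ as $t\to -\infty$, forcing $e^{-2f}$ to blow up at $-\infty$ and the corresponding integral to diverge there.

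The main (and really only nontrivial) ingredient is the strict positivity $c>0$, which in turn uses the upper half of the stability condition $\tau V/2 < 2N-2l$ through \ref{ineq:cispositive}. Everything else is routine bookkeeping with the convex function $f$ and the identification $e^{-\psi}=\det(q)$, so I do not anticipate any serious obstacle in carrying out this plan.
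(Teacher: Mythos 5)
Your plan works only in the case $c\neq 0$, and there it coincides with the paper's argument: divide \ref{ineq:forc2integrallimit} by $c^2$, then use strict convexity of $f=\ln(q_{11})+\psi/2$ (Equation \ref{eq:convexityofq11pluspsiby2}) to rule out $A_+<0$ or $A_->0$ by exponential divergence of $e^{-2f}=\det(q)/q_{11}^2$. That second half is fine. The gap is your appeal to \ref{ineq:cispositive} to get $c>0$. That inequality is proved in part (2) of Proposition \ref{prop:necessary} for a genuine solution satisfying the boundary conditions \ref{eq:boundaryintermsofs}, where $V$ is the volume of \emph{that} solution and the vortex stability condition gives $\tau V/2<2N-2l$ strictly. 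The lemma under discussion, however, concerns the \emph{limiting} $q$ obtained from a sequence $q_n$ with $s_n\to s$ in the closedness argument; its $c$ is $\lim_n c_n$ with $c_n=N-l-\tau V_n/4>0$, but the volume $V_n$ is \emph{not} fixed along the continuity path (it is only determined implicitly by $\lambda$ and $s$), so $V_n$ may a priori tend to the admissibility boundary $4(N-l)/\tau$ and only $c\geq 0$ survives the limit. Invoking \ref{ineq:cispositive} for the limit would be circular, since establishing the boundary conditions for the limit is exactly what this part of the paper is doing.

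Consequently you must treat the case $c=0$, for which \ref{ineq:forc2integrallimit} is vacuous. The paper does this by differentiating $e^{-\psi}=q_{11}q_{22}-x^2$ to obtain the identity \ref{eq:forpsidashintermsofq11},
\begin{gather*}
\psi' = -2(\ln(q_{11}))' + \left(2a+\int_0 ^t \lambda_s q_{11}e^{-2\alpha q_{11}-\psi/N}\right)+2\frac{xc}{q_{11}},
\end{gather*}
which for $c=0$ gives, upon letting $t\to\pm\infty$ and using $\psi'(\pm\infty)=\pm N$ together with the sign information \ref{ineq:onaplusforq22}, that $A_+=a+\frac{1}{2}\int_0^\infty \lambda_s q_{11}e^{-2\alpha q_{11}-\psi/N}>0$ and similarly $A_-<0$; the finiteness of $\int \det(q)/q_{11}^2$ then follows from the resulting exponential decay of $e^{-2f}$ at both ends (note the logical order is reversed relative to your plan: the sign of $A_\pm$ comes first, integrability second). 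Without this branch your proof does not cover all cases that can actually occur in the limit.
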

\begin{proof}
We differentiate $e^{-\psi}=q_{11} q_{22}-x^2$ to get the following (using \ref{eq:therealpartforc} and \ref{eq:theeqforq22n}).
\begin{gather}
-e^{-\psi} \psi' = q_{11}'q_{22}+q_{22}'q_{11}-2xx' \nonumber \\
= 2q_{11}'q_{22} -\det(q) \left(2a+\int_0 ^t \lambda_s q_{11}e^{-2\alpha q_{11}-\psi/N}  \right)-2\frac{x^2}{q_{11}}q_{11}' -2xc\frac{\det(q)}{q_{11}} \nonumber \\
\Rightarrow \psi' = -2(\ln(q_{11}))' + \left(2a+\int_0 ^t \lambda_s q_{11}e^{-2\alpha q_{11}-\psi/N}  \right)+2\frac{xc}{q_{11}}.
\label{eq:forpsidashintermsofq11}
\end{gather}
Now there are two possibilities.
\begin{enumerate}
  \item $c= 0$: In this case, taking the limits as $t\rightarrow \pm \infty$ in \ref{eq:forpsidashintermsofq11} and using \ref{ineq:aftercneq0} we see that $A_+>0, A_-<0$.
  \item $c\neq 0$: Integrating Equation \ref{eq:convexityofq11pluspsiby2} we see that $\displaystyle \int_{-\infty}^{\infty}\frac{\det(q)}{q_{11}^2}<\infty$.
\end{enumerate}
\end{proof}
Using Lemma \ref{lem:finitenessofintegralofdetqbyq112} we see that the following estimates hold.
\begin{gather}
-\frac{N}{2}\leq  (\ln(q_{11}))'(\infty)\leq 0 \nonumber \\
0\leq (\ln(q_{11}))'(-\infty)\leq \frac{N}{2}.
\label{ineq:onewayboundforderiusingAplusminus}
\end{gather}
\begin{remark}\label{rem:preciseboundsonlogarithmicderivative}
In fact, integrating Equation \ref{eq:simplifyforq11derivativesfrombochner} we see that
\begin{gather}
\ln((q_{11})_n)'(\infty)-\ln((q_{11})_n)'(0)=\displaystyle \int_0 ^{\infty} \lambda_{s_n} ((q_{11})_n-\tau)e^{-2\alpha (q_{11})_n-\psi_n/N}+c_n^2\int_0 ^{\infty}\frac{e^{-\psi_n}}{(q_{11})_n^2} \nonumber \\
\ln((q_{11})_n)'(-\infty)-\ln((q_{11})_n)'(0)=\int_0 ^{-\infty} \lambda_{s_n} ((q_{11})_n-\tau)e^{-2\alpha (q_{11})_n-\psi_n/N}+c_n^2 \int_0 ^{-\infty}\frac{e^{-\psi_n}}{(q_{11})_n^2}.
\label{eq:integratingseconderioflnq11}
\end{gather}
Using the decay of $(q_{11})_n$ as per Proposition \ref{prop:necessary}, the dominated convergence theorem (for the first integral) and the Fatou lemma (for the second integral), we conclude that
\begin{gather}
-\frac{N}{2}\leq  (\ln(q_{11}))'(\infty)\leq -l \nonumber \\
l\leq (\ln(q_{11}))'(-\infty)\leq  \frac{N}{2}.
\label{ineq:boundsondecayoflnq11dash}
\end{gather}
Now we can easily conclude (using \ref{eq:therealpartforc} and \ref{ineq:comparingq11andq22}) that $q$ decays at least at a rate of $\frac{l}{2}$ at $\pm \infty$.
\end{remark}
\indent The dominated convergence theorem along with the expressions \ref{eq:bounddiagonaltermsnecessary} show that the diagonal parts of the boundary conditions \ref{eq:boundaryintermsofs} hold. For the off-diagonal parts, if we show that the following estimates hold, then we the off-diagonal $L^2$ boundary conditions will be met.
\begin{gather}
\displaystyle \left \Vert e^{(N-2l+1/100)t/2} \left(\frac{2a-b+c}{2}+\frac{1}{2}\int_0^{t} \lambda_s (q_{11}(u)-x(u))e^{-2\alpha q_{11}(u)-\psi(u)/N}du \right) \right \Vert_{L^{\infty}(0,\infty)} \leq C \nonumber \\
\left\Vert e^{-(N-2l+1/100)t/2} \left(\frac{2a+b-c}{2}- \frac{1}{2}\int_{t}^0\lambda_s (q_{11}(u)+x(u))e^{-2\alpha q_{11}(u)-\psi(u)/N}du \right) \right \Vert_{L^{\infty}(-\infty,0)} \leq C.
\label{ineq:exponentialneedtoshowboundary}
\end{gather}

Estimates \ref{eq:boundwhen2lleNnecessary} and the dominated convergence theorem reduce \ref{ineq:exponentialneedtoshowboundary} to the following.
\begin{gather}
\displaystyle \left \Vert e^{(N-2l+1/100)t/2} \int_t^{\infty} \lambda_s (q_{11}(u)-x(u))e^{-2\alpha q_{11}(u)-\psi(u)/N}du  \right \Vert_{L^{\infty}(0,\infty)} \leq C \nonumber \\
\left\Vert e^{-(N-2l+1/100)t/2} \int_{-\infty}^t\lambda_s (q_{11}(u)+x(u))e^{-2\alpha q_{11}(u)-\psi(u)/N}du  \right \Vert_{L^{\infty}(-\infty,0)} \leq C.
\label{ineq:ttoinftyexponentialneedtoshowboundary}
\end{gather}
The explicit expression for $x$ (Equation \ref{eq:therealpartforc}) shows that $\frac{q_{11}\pm x}{q_{11}}$ are strictly monotonic. Thus they have a sign at $\pm \infty$. This observation combined with $0<q_{11}\leq \tau$ and $\psi'(\pm\infty)=\pm N$ converts \ref{ineq:ttoinftyexponentialneedtoshowboundary} to the following.
\begin{gather}
\displaystyle \left \Vert e^{(N-2l+1/100)t/2} \int_t^{\infty} q_{11}(u)\left(1-\frac{x(0)}{q_{11}(0)}-c\displaystyle \int_0^u \frac{\det(q)(\nu)}{q_{11}^2(\nu)}d\nu \right)e^{-(1-1/1000N)u}du  \right \Vert_{L^{\infty}(0,\infty)} \leq C \nonumber \\
\left\Vert e^{-(N-2l+1/100)t/2} \int_{-\infty}^t q_{11}(u)\left(1+\frac{x(0)}{q_{11}(0)}+c\displaystyle \int_0^u \frac{\det(q)(\nu)}{q_{11}^2(\nu)}d\nu \right )e^{(1-1/1000N)u}du  \right \Vert_{L^{\infty}(-\infty,0)} \leq C.
\label{ineq:explicitxttoinftyexponentialneedtoshowboundary}
\end{gather}
We will only consider the first inequality in \ref{ineq:explicitxttoinftyexponentialneedtoshowboundary}. The proof of the second estimate is similar. The rough idea is that for all $n$, there is an $L^{\infty}$ bound (one that is better than the one required above). The integrands are roughly exponential and hence we have control over the exponents. There are two cases:
\begin{enumerate}
  \item $(\ln(q_{11}))'(\infty)=-\frac{N}{2}$: In this case, $q_{11}\leq C e^{-\left(\frac{N}{2}-\frac{1}{100000000} \right)t}$. Thus
\begin{gather}
\displaystyle \left \vert e^{(N-2l+1/100)t/2} \int_t^{\infty} q_{11}(u)\left(1-\frac{x(0)}{q_{11}(0)}-c\displaystyle \int_0^u \frac{\det(q)(\nu)}{q_{11}^2(\nu)}d\nu \right)e^{-(1-1/10000000N^5)u}du  \right \vert \nonumber \\
\leq C e^{(N-2l+1/100)t/2} \int_{t}^{\infty} e^{-\left(\frac{N}{2}-\frac{1}{1000}+1 \right)u} du \nonumber \\
\leq C e^{(-2l-1)t/2}.
\end{gather}
  \item $(\ln(q_{11}))'(\infty)>-\frac{N}{2}$: In this case, $A_+=\frac{N}{2}+\ln(q_{11})'(\infty)>0$. By the dominated convergence theorem and the Fatou lemma,
      \begin{gather}
      (A_{+})_n\geq \frac{N}{2}+\ln(q_{11})'(\infty)-\epsilon>A_+\left(1-\frac{1}{10000N^3} \right)>0
      \label{ineq:onAplusn}
      \end{gather}
       for small $\epsilon=\frac{A_{+}}{10000000N^4}$ and large enough $n$. Using the dominated convergence theorem again, we see that
\begin{gather}
\lim_{n\rightarrow \infty} (\ln((q_{11})_n))'(\infty)=(\ln(q_{11}))'(\infty) \nonumber \\
\lim_{n\rightarrow \infty} (A_+)_n=A_+.
\label{eq:limitsafterDCT}
\end{gather}
Now we use the fact that for all $n$, 
\begin{gather}
\displaystyle \left \Vert e^{(N-2l+1-\frac{1}{1000})t/2} \int_t^{\infty} (q_{11})_n(u)\left(1-\frac{x_n(0)}{(q_{11})_n(0)}-c_n\displaystyle \int_0^u \frac{\det(q_n)(\nu)}{(q_{11})_n^2(\nu)}d\nu \right)e^{-(1+\frac{1}{1000})u}du  \right \Vert_{L^{\infty}(0,\infty)}  \leq C_n.
\label{ineq:fornbounds}
\end{gather}
We now have two subcases depending on the value of $U_n=1-\frac{x_n(0)}{(q_{11})_n(0)}-c_n\displaystyle \int_0^{\infty} \frac{\det(q_n)(\nu)}{(q_{11})_n^2(\nu)}d\nu$.
\begin{enumerate}
\item For all but finitely many $n$, $U_n =0$: Thus $U=\displaystyle \lim_{n\rightarrow \infty}U_n=0$.  We now see that the following inequalities hold.
\begin{gather}
\displaystyle \left \vert e^{(N-2l+1/100)t/2} \int_t^{\infty} q_{11}(u)\left(1-\frac{x(0)}{q_{11}(0)}-c\displaystyle \int_0^u \frac{\det(q)(\nu)}{q_{11}^2(\nu)}d\nu \right)e^{-(1-1/1000N)u}du  \right \vert \nonumber \\
= \left \vert e^{(N-2l+1/100)t/2} \int_t^{\infty} q_{11}(u)c\displaystyle \int_u^{\infty} \frac{\det(q)(\nu)}{q_{11}^2(\nu)}d\nu e^{-(1-1/1000N)u}du  \right \vert \nonumber \\
\leq C e^{(N-2l+1/100)t/2} e^{((\ln(q_{11}))'_{\infty}-2A_+-1+\frac{1}{300})t} \leq C.
\end{gather}
\item For infinitely many $n$, $U_n\neq 0$: As a consequence of this assumption, for all sufficiently large $n$, we have the following bound.
\begin{gather}
\left \vert e^{(N-2l+1-\frac{1}{1000})t/2} \int_t^{\infty} (q_{11})_n(u)e^{-(1+\frac{1}{1000})u}du   \right \vert \leq C_n.
\end{gather}
Now $(q_{11})_n\geq \frac{1}{C} e^{\left((\ln((q_{11})_n))'(\infty)-\frac{1}{100000N^6}\right)t}$. Hence,
\begin{gather}
e^{(N-2l+1-\frac{1}{1000})t/2} e^{\left((\ln((q_{11})_n))'(\infty)-1-\frac{1}{400}\right)t}\leq C_n
\end{gather}
Thus $N/2-l+(\ln((q_{11})_n))'(\infty) \leq \frac{1}{200}$. Thus taking limits, we see that
\begin{gather}
N/2-l+(\ln((q_{11})))'(\infty) \leq \frac{1}{200} \nonumber \\
\Rightarrow \displaystyle \left \vert e^{(N-2l+1/100)t/2} \int_t^{\infty} q_{11}(u)\left(1-\frac{x(0)}{q_{11}(0)}-c\displaystyle \int_0^u \frac{\det(q)(\nu)}{q_{11}^2(\nu)}d\nu \right)e^{-(1-1/1000N)u}du  \right \vert \nonumber \\
\leq C e^{(N-2l+1/100)t/2} \int_t^{\infty} q_{11}(u) e^{-(1-1/1000N)u}du \nonumber \\
\leq C e^{t\left(N/2-l+1/100+\ln(q_{11})'(\infty)-1 \right)} \leq C.
\end{gather}
\end{enumerate}
\end{enumerate}
\indent Therefore, we have managed to show that if $s_n\rightarrow s$, passing to a subsequence $q_n\rightarrow q$ smoothly on compact sets and $q$ satisfies the right boundary conditions. Thus the set of $s$ for which there is a solution is closed. Hence Equation \ref{eq:nonabelianvortices} has a symmetric smooth solution at $s=1$.
\section{Solutions for all admissible volumes}\label{sec:volume}
\indent So far, for any given $\lambda>0$, we have produced a solution. Now we will produce a solution for any given admissible volume. We first prove the following uniqueness result.
\begin{proposition}\label{prop:uniquenessforgivenlambda}
Given $\lambda>0$, there is a unique smooth solution to Equation \ref{eq:nonabelianvorticessimplifiedtooneequation}.
\end{proposition}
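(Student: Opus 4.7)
The plan is to adapt the linearised injectivity argument of Section~\ref{sec:continuityopenness} to a nonlinear, two-solution, global setting via path-integration. Let $H^{(0)}$ and $H^{(1)}$ be two smooth $S^1$-symmetric solutions of Equation \ref{eq:nonabelianvorticessimplifiedtooneequation} for the same $\lambda$. Since both extend smoothly across $0$ and $\infty$, the endomorphism $(H^{(0)})^{-1}H^{(1)}$ is a smooth, $H^{(0)}$-positive-definite Hermitian endomorphism of $E$, so we may write $H^{(1)}=H^{(0)}\exp(g)$ with $g$ a global smooth $H^{(0)}$-Hermitian endomorphism. Consider the straight-line path $H_t:=H^{(0)}\exp(tg)$ for $t\in[0,1]$. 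In an $H^{(0)}$-orthonormal frame diagonalising $g$ with eigenvalues $\mu_1,\mu_2$ and $\phi$-components $a_1,a_2$, one has $\Vert \phi \Vert_{H_t}^2 = e^{t\mu_1}\vert a_1\vert^2 + e^{t\mu_2}\vert a_2\vert^2$, which is pointwise convex in $t$. Hence $\Vert \phi \Vert_{H_t}^2 \leq \max(\Vert \phi \Vert_{H^{(0)}}^2,\Vert \phi \Vert_{H^{(1)}}^2)\leq \tau$ throughout the path, by Lemma~\ref{lem:fundamentalq11estimate} applied at the two endpoints.

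With $T$ the operator of Section~\ref{sec:continuityopenness}, the identity $T(H^{(0)})=T(H^{(1)})=0$ gives
\begin{gather*}
0=\int_0^1 \frac{d}{dt}T(H_t)\,dt=\int_0^1 \delta T\big\vert_{H_t}(g)\,dt,
\end{gather*}
where $\delta T\vert_{H_t}(g)$ is obtained from \ref{eq:linofT} by replacing $H$ with $H_t$ and $h$ with $g$: since $g$ commutes with $e^{tg}$, it remains $H_t$-Hermitian for all $t$, and the chain $v_t=v_0-t\,\mathrm{tr}(g)$ supplies the $\mathrm{tr}(g)/N$ contribution. Pairing with $g$ under trace, integrating over $\mathbb{P}^1$, and integrating by parts on the $i\bar\partial\partial_{H_t}$ term (with no boundary contribution by smooth extension), we arrive at
\begin{gather*}
0=\int_0^1\int_{\mathbb{P}^1}\mathrm{tr}(\partial_{H_t} g\wedge \bar\partial g)\,dt+\int_0^1\int_{\mathbb{P}^1}\mathcal{I}(H_t,g)\,dt,
\end{gather*}
where $\mathcal{I}(H_t,g)$ denotes the zeroth-order integrand from Lemma \ref{lem:nonnegativesecondintegrand}. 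The key point is that the proof of Lemma \ref{lem:nonnegativesecondintegrand} uses only $\Vert \phi \Vert_{H_t}^2 \leq \tau$, which we have just verified; hence the lemma applies verbatim at every $H_t$ and $\mathcal{I}(H_t,g)\geq 0$ pointwise in $t$ and on $\mathbb{P}^1$.

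Both $t$-integrands are therefore non-negative and must vanish pointwise. From $\mathrm{tr}(\partial_{H_t}g\wedge \bar\partial g)\equiv 0$ we conclude $\bar\partial g=0$ and $\partial_{H_t} g=0$, so $g$ is $H_t$-parallel and holomorphic; the $S^1$-symmetry then forces $g$ to be a constant diagonal matrix. The pointwise vanishing of $\mathcal{I}(H_t,g)$ gives $\mathrm{tr}(g)=0$, so $g=\mathrm{diag}(g_{11},-g_{11})$, and the final inequality chain of Section~\ref{sec:continuityopenness} (using $2\alpha\tau=1/N<1$, which holds because $N\geq 2$ under the assumption $0<l<N$, together with $0<\Vert\phi\Vert^2<\tau$ on an open set near the origin) forces $g_{11}=0$. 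Hence $g\equiv 0$ and $H^{(1)}=H^{(0)}$. The single essential technical ingredient is the convexity observation ensuring $\Vert\phi\Vert_{H_t}^2\leq \tau$ all along the interpolation; without it, Lemma \ref{lem:nonnegativesecondintegrand} could fail at intermediate $t$ and the whole non-negativity structure would collapse.
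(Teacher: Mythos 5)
Your proposal is correct and follows essentially the same route as the paper: interpolate $H_t=H^{(0)}\exp(tg)$, use pointwise convexity of $t\mapsto\Vert\phi\Vert_{H_t}^2$ to keep the bound $\Vert\phi\Vert_{H_t}^2\leq\tau$ along the path, integrate the equation in $t$, pair with $g$ and integrate by parts, invoke Lemma \ref{lem:nonnegativesecondintegrand}, and finish with the endgame of Section \ref{sec:continuityopenness}. The only difference is cosmetic (the paper parametrises the path over $[1,2]$ and leaves the convexity remark implicit), so no further comment is needed.
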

\begin{proof}
Suppose there are two solutions, $H_1=H_0 g_1$ and $H_2= H_0 g_2 = H_1 g_1^{-1} g_2= H_1 g=H_1 \exp (h)$. Let $H_t = H_1 \exp((t-1)h)$ (so that $H_{t=1}=H_1$, $H_{t=2}=H_2$, and $H_t=H_0 g_t$). Then
\begin{gather}
i\bar{\partial}(g^{-1} \partial_{H_1} g) =4^{1/N} \lambda_{s=1} \omega_{FS}\left((\tau- \phi \otimes \phi^{*_{H_2}})e^{-2\alpha \Vert \phi \Vert_{H_2}^2-\frac{v_2}{N}}-(\tau- \phi \otimes \phi^{*_{H_1}})e^{-2\alpha \Vert \phi \Vert_{H_1}^2-\frac{v_1}{N}} \right) \nonumber \\
\Rightarrow \displaystyle \int_1^2 dt \frac{d}{dt}i\bar{\partial}(e^{-(t-1)h} \partial_{H_1} e^{(t-1)h})
= 4^{1/N} \lambda_{s=1} \omega_{FS} \displaystyle \int_1 ^2 dt \frac{d}{dt}\left((\tau- \phi \otimes \phi^{*_{H_t}})e^{-2\alpha \Vert \phi \Vert_{H_t}^2-\frac{v_t}{N}} \right) \nonumber \\
\Rightarrow
0= \displaystyle \int_1 ^2 dt i \bar{\partial}\partial_{H_t} h  \nonumber \\
+ 4^{1/N} \lambda_{s=1} \omega_{FS} \displaystyle \int_1 ^2 dte^{-2\alpha \Vert \phi \Vert_{H_t}^2-\frac{v_t}{N}} \Bigg(\phi\otimes \phi^{*_{H_t}}h+\left(\tau- \phi \otimes \phi^{*_{H_t}}\right)\left(2\alpha \mathrm{tr}(h\phi \otimes \phi^{*_{H_t}})+\frac{\mathrm{tr}(h)}{N} \right) \Bigg).
\label{eq:H2intermsofH1}
\end{gather}
Note that by convexity, $\Vert \phi \Vert_{H_t}^2\leq \tau$. Just as in Section \ref{sec:continuityopenness}, we multiply by $h$, take trace, and integrate-by-parts. The proof of Lemma \ref{lem:nonnegativesecondintegrand} shows that the right-hand-side of Equation \ref{eq:H2intermsofH1} is non-negative and is hence identically zero. Thus $h$ is a parallel symmetric endomorphism, and hence a constant matrix. The reasoning after the proof of Lemma \ref{lem:nonnegativesecondintegrand} shows that $h=0$.
\end{proof}
Proposition \ref{prop:uniquenessforgivenlambda} along with the proof of openness using the implicit function theorem shows that the solution $q$ varies smoothly in $\lambda$. Now we study the volume $V_{\lambda}$ of the K\"ahler metric $\omega_{\lambda}$ as $\lambda$ approaches $0$. To this end, define $\tilde{\psi}_{\lambda}=-N\ln \lambda+\psi_{\lambda}$.
\begin{proposition}\label{prop:zerolimitlambda}
$\displaystyle \lim_{\lambda\rightarrow 0^+} V_{\lambda} = \frac{4N-4l}{\tau}$.
\end{proposition}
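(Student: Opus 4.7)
My plan is to reduce the assertion $V_\lambda \to \tfrac{4(N-l)}{\tau}$ to the claim $c_\lambda \to 0^+$ as $\lambda \to 0^+$, via the already-established identity \ref{ineq:cispositive}, which says $V_\lambda = \tfrac{4(N-l)}{\tau} - \tfrac{4 c_\lambda}{\tau}$ and $c_\lambda>0$ (Proposition \ref{prop:necessary}). I would then prove $c_\lambda \to 0$ by combining a uniform-in-$\lambda$ $L^1$-bound on $c_\lambda^2 e^{-\psi_\lambda}/q_{11}^2$ with the divergence $\int_{-\infty}^{\infty} e^{-\psi_\lambda}\,dt \to \infty$.

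For the first ingredient, I would integrate the Bochner-type identity \ref{eq:convexityofq11pluspsiby2} over $\mathbb{R}$. The left-hand side telescopes to $[(\ln q_{11})' + \psi'/2]_{-\infty}^{\infty}$, which by Remark \ref{rem:preciseboundsonlogarithmicderivative} and $\psi'(\pm\infty)=\pm N$ is at most $(-l)-l+N = N-2l$. Since both summands on the right are non-negative, this yields
\[
\int_{-\infty}^{\infty} \frac{c_\lambda^2 \, e^{-\psi_\lambda}}{q_{11}^2}\, dt \;\leq\; N - 2l,
\]
with a bound that is independent of $\lambda$.

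For the second ingredient, I would use $c_\lambda > 0 \Rightarrow V_\lambda < \tfrac{4(N-l)}{\tau}$ and the fact that $\det(g_0)^{-1/N} e^{u_0}$ is a smooth positive function on $\mathbb{P}^1$ to get a $\lambda$-independent upper bound $\lambda_s \leq C\lambda$. Convexity of $\psi_\lambda$ with $\psi'_\lambda(0)=0$ gives $e^{-\psi_\lambda/N} \leq e^{-\psi_\lambda(0)/N}$, and the right-hand side is uniformly bounded by \ref{ineq:forpsizero}. Combined with $q_{11}\leq \tau$ (Lemma \ref{lem:fundamentalq11estimate}), equation \ref{eq:forpsi} produces $\psi''_\lambda \leq C'\lambda$ uniformly in $\lambda$. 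Integrating twice from $t=0$ yields $\psi_\lambda(t) \leq \psi_\lambda(0) + \tfrac{C'\lambda}{2} t^2$, so
\[
\int_{-\infty}^{\infty} e^{-\psi_\lambda}\, dt \;\geq\; e^{-\psi_\lambda(0)} \int_{-\infty}^{\infty} e^{-C'\lambda t^2/2}\, dt \;\gtrsim\; \lambda^{-1/2},
\]
using also the uniform \emph{upper} bound on $\psi_\lambda(0)$ from \ref{ineq:forpsizero}.

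Combining the two displayed bounds with $q_{11}\leq \tau$ forces $c_\lambda^2 \cdot \tau^{-2} \lambda^{-1/2} \lesssim N-2l$, hence $c_\lambda = O(\lambda^{1/4}) \to 0$ and therefore $V_\lambda \to \tfrac{4(N-l)}{\tau}$. I do not expect a serious obstruction here: every a priori input (Remark \ref{rem:preciseboundsonlogarithmicderivative}, Lemma \ref{lem:fundamentalq11estimate}, the two-sided bound \ref{ineq:forpsizero}, and $c_\lambda > 0$) is already in hand from Section \ref{sec:continuityclosedness}. The one point requiring care is verifying that the constants in both bounds are truly $\lambda$-uniform, which reduces to the uniform two-sided control of $\psi_\lambda(0)$ already extracted in \ref{ineq:forpsizero}; in particular the upper bound on $\psi_\lambda(0)$ is what allows $e^{-\psi_\lambda(0)}$ to be bounded \emph{below} and so forces the Gaussian lower bound on $\int e^{-\psi_\lambda}$.
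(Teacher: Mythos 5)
Your overall strategy is exactly the paper's: reduce the statement to $c_\lambda\to 0^+$ via the identity $c_\lambda=N-l-\tfrac{\tau}{4}V_\lambda$ from \ref{ineq:cispositive}, extract a $\lambda$-uniform bound on $c_\lambda^2\int_{-\infty}^{\infty} e^{-\psi_\lambda}/q_{11}^2$ by integrating the Bochner-type identity and using the endpoint bounds of Remark \ref{rem:preciseboundsonlogarithmicderivative}, and then force $c_\lambda\to 0$ by showing $\int_{-\infty}^{\infty} e^{-\psi_\lambda}\,dt\to\infty$. The first ingredient is correct as you state it (the paper integrates \ref{eq:simplifyforq11derivativesfrombochner} rather than \ref{eq:convexityofq11pluspsiby2}, an immaterial difference), and the reduction via \ref{eq:explicitaandc} is the same as in the paper.

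The problem is in the step you yourself flag as the crux: $e^{-\psi_\lambda(0)/N}$ is \emph{not} uniformly bounded in $\lambda$. The two-sided bound \ref{ineq:forpsizero} is derived with $\lambda$ fixed, and its constants absorb $\lambda_s\asymp\lambda$; rerunning that computation while tracking $\lambda$ gives $\tfrac{1}{C}\le\lambda e^{-\psi_\lambda(0)/N}\le C$, i.e.\ $\psi_\lambda(0)=N\ln\lambda+O(1)\to-\infty$ --- this is precisely the first line of the paper's proof, and is why the paper introduces $\tilde\psi_\lambda=\psi_\lambda-N\ln\lambda$. Consequently your estimate $\psi_\lambda''\le C'\lambda$ is off: from \ref{eq:forpsi} one gets $\psi_\lambda''\le C\lambda\cdot 2\tau\cdot e^{-\psi_\lambda(0)/N}\le C$, the powers of $\lambda$ cancelling, and the divergence of $\int e^{-\psi_\lambda}$ comes not from the Gaussian integral $\int e^{-C'\lambda t^2/2}\,dt\sim\lambda^{-1/2}$ but from the prefactor: since $|\psi_\lambda'|\le N$ everywhere, $\psi_\lambda(t)\le\psi_\lambda(0)+N|t|$, whence $\int e^{-\psi_\lambda}\ge \tfrac{2}{N}e^{-\psi_\lambda(0)}\asymp\lambda^{-N}\to\infty$. (Equivalently, the paper observes $\psi_\lambda\to-\infty$ pointwise and applies Fatou.) So your conclusion survives --- in fact with a much stronger divergence rate than $\lambda^{-1/2}$ --- but the uniformity claim on which you hang the argument is false as stated, and the source of the divergence is misidentified; the fix is to replace ``$e^{-\psi_\lambda(0)/N}$ uniformly bounded'' by ``$\lambda e^{-\psi_\lambda(0)/N}$ uniformly bounded'' and rerun your two estimates accordingly.
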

\begin{proof}
The arguments in \ref{ineq:forpsizero} show that $\frac{1}{C}\leq \lambda e^{-\psi_{\lambda}(0)/N} \leq C$.   Since $\tilde{\psi}_{\lambda}'' =\psi''_{\lambda}$, $\tilde{\psi}_{\lambda}$ is strictly convex. Therefore, we see that $e^{-\tilde{\psi}_{\lambda}}$ decays to $0$ exponentially uniformly as $\lambda\rightarrow 0$, and that $\psi_{\lambda}\rightarrow -\infty$ pointwise. Using this observation, the facts that $\ln((q_{11})_{\lambda})'(\infty)\leq -l, \ln((q_{11})_{\lambda})'(-\infty)\geq l$, and integrating \ref{eq:simplifyforq11derivativesfrombochner} we see that
\begin{gather}
c_{\lambda}^2 \displaystyle \int_{-\infty}^{\infty} \frac{e^{-\psi_{\lambda}}}{(q_{11})_{\lambda}^2}<\infty.
\label{ineq:oncsquareetc}
\end{gather}
Since $(q_{11})_{\lambda}\leq \tau$, the Fatou lemma shows that $\displaystyle \int_{-\infty}^{\infty} \frac{e^{-\psi_{\lambda}}}{(q_{11})_{\lambda}^2}\rightarrow \infty$ and hence $c_{\lambda}\rightarrow 0$. Using the explicit formula for $c_{\lambda}$ (Equation \ref{eq:explicitaandc}) we see that
\begin{gather}
\displaystyle \lim_{\lambda\rightarrow 0^{+}} \frac{\tau}{2}\int_{-\infty}^{\infty} \lambda_{s=1} e^{-2\alpha (q_{11})_{\lambda}(u)-\psi_{\lambda}(u)/N}du  = N-l.
\end{gather}
Thus
\begin{gather}
\displaystyle \lim_{\lambda\rightarrow 0^{+}} V_{\lambda} = \lim_{\lambda\rightarrow 0^{+}} \int_{-\infty}^{\infty} 2\lambda_{s=1} e^{-2\alpha (q_{11})_{\lambda}(u)-\psi_{\lambda}(u)/N}du = \frac{4(N-l)}{\tau}.
\end{gather}
\end{proof}
Next we study the limit of the volume as $\lambda\rightarrow \infty$.
\begin{proposition}\label{prop:infinitylimitlambda}
$\displaystyle \lim_{\lambda\rightarrow \infty} V_{\lambda} = \frac{2N}{\tau}$.
\end{proposition}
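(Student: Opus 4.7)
The plan is to rescale $\tilde\psi_\lambda := \psi_\lambda - N\ln\lambda$. Writing $\lambda_{s=1} = V_\lambda \lambda\, g(t)$ with $g(t) := (\det g_0)^{-1/N} e^{u_0}/(2\cdot 4^{1/N})$, Equation \ref{eq:forpsi} becomes
\begin{gather*}
\tilde\psi_\lambda'' = V_\lambda\, g(t)\,(2\tau - q_{11,\lambda})\, e^{-2\alpha q_{11,\lambda} - \tilde\psi_\lambda/N},
\end{gather*}
so the rescaled system is $\lambda$-independent except through $V_\lambda$, while the rescaled Bochner equation \ref{eq:simplifyforq11derivativesfrombochner} picks up only one additional factor $c_\lambda^2 \lambda^{-N} e^{-\tilde\psi_\lambda}/q_{11,\lambda}^2$, of size $O(\lambda^{-N})$. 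Using $\int\omega_\lambda = 2\pi V_\lambda$ one checks that the rescaled version of the normalization \ref{eq:normalisationofH} reads $\int_\R g\,e^{-2\alpha q_{11,\lambda} - \tilde\psi_\lambda/N}\,dt = 1/2$. Integrating the $\tilde\psi_\lambda$-equation over $\R$ using $\tilde\psi_\lambda'(\pm\infty)=\pm N$ then yields the identity
\begin{gather*}
V_\lambda = \frac{2N}{\tau - I_\lambda}, \qquad I_\lambda := \int_{-\infty}^{\infty} g(t)\,q_{11,\lambda}\, e^{-2\alpha q_{11,\lambda} - \tilde\psi_\lambda/N}\,dt,
\end{gather*}
so the conclusion $V_\lambda \to 2N/\tau$ is equivalent to $I_\lambda \to 0$.

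I will argue by contradiction and compactness. Suppose some sequence $\lambda_n\to\infty$ has $I_{\lambda_n}\to I^*>0$, so that $V_{\lambda_n}\to V^*\in (2N/\tau,(4N-4l)/\tau]$ and $c_{\lambda_n}\to c^*=N-l-\tau V^*/4\in [0,N/2-l)$. Because $\tilde\psi_\lambda(0)=\psi_\lambda(0)-N\ln\lambda$ is bounded by \ref{ineq:forpsizero}, and because the \emph{a priori} estimates of Section \ref{sec:continuityclosedness} and Proposition \ref{prop:necessary} transfer verbatim to the rescaled system (with $\lambda_n$ playing the role of $s_n$), one extracts subsequential $C^\infty_{loc}$ limits $(\tilde\psi^*, q^*)$. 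The rescaled Bochner correction $c_\lambda^2\lambda^{-N}e^{-\tilde\psi_\lambda}/q_{11,\lambda}^2$ vanishes in the limit---its integral is bounded by $N-2l$ thanks to Lemma \ref{lem:finitenessofintegralofdetqbyq112}, and the $\lambda^{-N}$ prefactor kills it pointwise on the region where $q_{11}^*>0$---so the limit satisfies the \emph{concave} equation
\begin{gather*}
(\ln q_{11}^*)'' = V^*\, g(t)\,(q_{11}^*-\tau)\, e^{-2\alpha q_{11}^*-\tilde\psi^*/N} \le 0.
\end{gather*}

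The main obstacle, and the final step, is to rule out any such limit with $I^*>0$. The heuristic is that in the $\lambda$-finite setting the repulsive Bochner term $c_\lambda^2 e^{-\psi_\lambda}/q_{11,\lambda}^2$ is exactly what forces $q_{11,\lambda}(0)$ to stay bounded below in the closedness argument of Proposition \ref{prop:necessary}; once it disappears in the limit, that lower bound no longer persists. To turn this into a proof, I plan to combine the identity $c^*=N-l-\tau V^*/4$ from \ref{eq:explicitaandc}, the rescaled normalization $\int g e^{-2\alpha q_{11}^*-\tilde\psi^*/N}=1/2$, the decay-rate bounds $(\ln q_{11}^*)'(\infty)\in [-N/2,-l]$ and $(\ln q_{11}^*)'(-\infty)\in [l,N/2]$ from Remark \ref{rem:preciseboundsonlogarithmicderivative}, and a rescaled analogue of the off-diagonal boundary analysis \ref{ineq:ttoinftyexponentialneedtoshowboundary}--\ref{ineq:explicitxttoinftyexponentialneedtoshowboundary}, to force the extremal decay rates $(\ln q_{11}^*)'(\pm\infty)=\mp N/2$ and thereby $c^*=N/2-l$, i.e., $V^*=2N/\tau$---contradicting $V^*>2N/\tau$ and completing the proof.
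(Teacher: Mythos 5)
Your setup is sound and matches the paper's: the rescaling $\tilde\psi_\lambda=\psi_\lambda-N\ln\lambda$, the boundedness of $\tilde\psi_\lambda(0)$ from \ref{ineq:forpsizero}, the extraction of subsequential limits, and the algebraic identity $V_\lambda=2N/(\tau-I_\lambda)$ (equivalently, via \ref{eq:explicitaandc}, $V_\lambda\to 2N/\tau$ iff $\int\lambda_{s}q_{11}e^{-2\alpha q_{11}-\psi/N}\to 0$ iff $c_\lambda\to\frac{N-2l}{2}$) are all correct and are exactly the reductions the paper performs. But the decisive step --- ruling out a limit with $I^*>0$, i.e.\ with $(q_{11})_\infty\not\equiv 0$ --- is not proved in your write-up: you state it as ``the main obstacle'' and then only announce a plan (``I plan to combine \dots to force the extremal decay rates $(\ln q_{11}^*)'(\pm\infty)=\mp N/2$ and thereby $c^*=N/2-l$''). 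You give no argument for why the decay rates of the limit would be forced to their extremal values, nor for why extremal decay rates would yield $c^*=N/2-l$; the bounds of Remark \ref{rem:preciseboundsonlogarithmicderivative} only give $(\ln q_{11})'(\infty)\in[-N/2,-l]$ and say nothing that pins down the endpoint. As it stands the proof is incomplete precisely where the real content lies.

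The paper closes this gap by a different mechanism, which exploits the degeneration of the \emph{whole matrix} $q$ rather than the scalar $q_{11}$ alone. Since $e^{-\psi_\lambda}=\lambda^{-N}e^{-\tilde\psi_\lambda}\to 0$, the limit satisfies $\det(q)_\infty\equiv 0$. If $(q_{11})_\infty(0)>0$, then Equations \ref{eq:theeqforq22n} and \ref{eq:therealpartforc} show $(q_{22})_\infty$ and $x_\infty$ are both proportional to $(q_{11})_\infty$; the vanishing-integral identity \ref{eq:zerointegralofq12} (with the domination \ref{ineq:comparingxnq11n}) forces $x_\infty\equiv 0$, whence $(q_{11})_\infty(q_{22})_\infty=x_\infty^2+\det(q)_\infty\equiv 0$ and so $(q_{22})_\infty\equiv 0$; the off-diagonal entry of the integrated equation \ref{eq:uponintegrationforq} then gives $b=0$, contradicting $b>0$ from \ref{eq:explicitaandc}. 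Hence $(q_{11})_\infty\equiv 0$ and $c_\lambda\to\frac{N-2l}{2}$. Note also that your claim that the a priori estimates of Proposition \ref{prop:necessary} ``transfer verbatim'' cannot be right for part (3): the lower bound on $(q_{11})_n(0)$ there rests on $c\neq 0$ together with $\int e^{-\psi}/q_{11}^2\leq C$, and in the present limit $e^{-\psi_\lambda}\to 0$, so that argument genuinely fails --- indeed the conclusion is the opposite, $(q_{11})_\infty(0)=0$. You correctly sensed this, but it means your compactness step should claim only $W^{1,p}_{loc}$ convergence for $q_{11}$, and it underscores that the contradiction must come from the matrix structure ($x$, $q_{22}$, $b$), not from $q_{11}$ and its decay rates alone.
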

\begin{proof}
As in the proof of Proposition \ref{prop:zerolimitlambda} we see that $-C\leq e^{-\tilde{\psi}_{\lambda}}\leq C$ and $e^{-\psi_{\lambda}}\rightarrow 0$ as $\lambda\rightarrow \infty$. Moreover, consider any sequence of $\lambda_n\rightarrow \infty$. There exists a subsequence (which we continue to denote by $\lambda_n$) such that $\tilde{\psi}_{\lambda_n}$ converges smoothly on compact sets to $\tilde{\psi}_{\infty}$. By \ref{ineq:estimateonlnq11dash} we see that $(q_{11})_n'$ is bounded. Consider any $W_{loc}^{1,p}$ (and hence $C^{0,\beta}_{loc}$) convergent subsequence $(q_{11})_n\rightarrow (q_{11})_{\infty}$. By the dominated convergence theorem,
\begin{gather}
c_n\rightarrow c_{\infty} = \frac{N-2l}{2}-\frac{1}{4}\int_{-\infty}^{\infty}\frac{1}{2\times 4^{1/N}}\det(g_0)^{-1/N}Ve^{u_0}(q_{11})_{\infty}e^{-2\alpha (q_{11})_{\infty}-\tilde{\psi}_{\infty}/N}.
\label{eq:limitingc}
\end{gather}
 Now we have two possibilities:
\begin{enumerate}
  \item $(q_{11})_{\infty}(0)=0$: The estimates in \ref{ineq:q11intermsof110} show that $(q_{11})_{\infty}=0$ identically. Hence $c_n\rightarrow \frac{N-2l}{2}$. Thus $V_n\rightarrow \frac{2N}{\tau}$.
  \item $(q_{11})_{\infty}(0)>0$: In this case, $(q_{11})_{\infty}$ is easily seen to be smooth. Equation \ref{eq:theeqforq22n} shows that passing to a further subsequence, $(q_{22})_n\rightarrow (q_{22})_{\infty}$ smoothly on compact sets. In fact, $(q_{22})_{\infty}= \frac{(q_{22})_{\infty}(0)}{(q_{11})_{\infty}(0)}(q_{11})_{\infty}$. Moreover, Equation \ref{eq:therealpartforc} shows that $x_n\rightarrow x_{\infty}$ smoothly on compact sets and $x_{\infty}=\frac{x_{\infty}(0)}{(q_{11})_{\infty}(0)}q_{11}$. Using \ref{ineq:comparingxnq11n}, \ref{eq:zerointegralofq12}, and the dominated convergence theorem, we see that $x_{\infty}=0$ identically. Since $\det(q)_{\infty}=0$ identically, this means that $(q_{11})_{\infty}(q_{22})_{\infty}=0$ identically. Thus $q_{22}=0$ identically. From Equation \ref{eq:uponintegrationforq} we see that $bq_{11}=0$, which implies that $b=0$. However, from \ref{eq:explicitaandc} we see that $b>0$ and hence we have a contradiction.
\end{enumerate}
Therefore, $(q_{11})_{\infty}(0)=0$ and hence $(q_{11})_{\infty}=0$ identically. The formula for $c$ (Equation \ref{eq:explicitaandc}) shows that for any sequence $\lambda_n\rightarrow \infty$, there exists a subsequence such that $V_n\rightarrow \frac{2N}{\tau}$. Thus $V_n\rightarrow \frac{2N}{\tau}$ and we are done.
\end{proof}
From Propositions \ref{prop:zerolimitlambda} and \ref{prop:infinitylimitlambda}, and the continuity of the volume in $\lambda$, we see using the intermediate value theorem that there exists a smooth solution for any admissible volume. \qed

\end{document}